\pdfoutput=1
\documentclass[11pt]{amsart}

\usepackage[margin=1in]{geometry}
\usepackage{amsmath}
\usepackage{amssymb}
\usepackage{mathtools}
\usepackage{mathrsfs}
\usepackage{graphicx}
\usepackage{url}
\usepackage[hidelinks]{hyperref}

\allowdisplaybreaks[2]
\numberwithin{equation}{section}
\providecommand{\tightlist}{%
  \setlength{\itemsep}{0pt}\setlength{\parskip}{0pt}}
\providecommand{\Description}[1]{}

\newtheorem{theorem}{Theorem}[section]
\newtheorem{lemma}[theorem]{Lemma}
\newtheorem{proposition}[theorem]{Proposition}
\newtheorem{corollary}[theorem]{Corollary}
\newtheorem{conjecture}[theorem]{Conjecture}
\theoremstyle{definition}
\newtheorem{definition}[theorem]{Definition}
\newtheorem{example}[theorem]{Example}
\theoremstyle{remark}
\newtheorem{remark}[theorem]{Remark}

\title[Cover-Time Changes Under Edge Addition]{Nonlocality of Cover-Time Changes Under Edge Addition}
\author{Ian Meng Si}
\address{Department of Statistics, University of California, Berkeley, Berkeley, CA 94720, USA}
\email{ianmeng0511@berkeley.edu}
\urladdr{https://orcid.org/0009-0006-9406-9579}
\subjclass[2020]{60J10; 05C81; 05C50}
\keywords{cover time, random walk, edge addition, nonlocality, killed Green matrix, conductance interpolation, effective resistance}
\date{}

\hypersetup{
  pdftitle={Nonlocality of Cover-Time Changes Under Edge Addition},
  pdfauthor={Ian Meng Si}
}

\begin{document}

\begin{abstract}
Let \(G\) be a finite connected simple graph, let \(uv\) be a nonedge, and
let \(s\) be a starting vertex. We study the exact change in the expected
cover time of simple random walk when \(uv\) is inserted. A killed Green
matrix update, combined with target-set inclusion--exclusion, gives an exact
formula using only the original graph. The response can have either sign.

Our main result is a nonlocality theorem. For every radius \(r\ge1\), we
construct two marked configurations whose ambient-degree-labelled
radius-\(r\) neighbourhoods at \(s,u,v\) are isomorphic but whose fixed-start
cover-time responses have opposite signs. The construction also matches the
three marked degrees, the marked distance, and the effective resistance
\(R_{uv}\). The two graphs have different orders; equal-order nonlocality
remains open.

We complement this result with a conductance interpolation theorem and an
occupation interpretation of its high-conductance coefficient. After
contracting \(u\) and \(v\), that coefficient is a positive multiple of the
expected pre-cover occupation of the contracted vertex, and its zero case is
classified exactly. A path with two pendant insertion endpoints is solved for
all starting vertices and shows a sharp change of sign across the start set.
\end{abstract}

\maketitle

\section{Introduction}

The cover time of a finite graph is the first time at which a random
walk has visited every vertex. Its algorithmic importance goes back to
universal traversal sequences \cite{aleliunas1979}; general covering
inequalities were developed by Matthews \cite{matthews1988}, and Feige
obtained the sharp extremal order \cite{feige1995upper,feige1995lower}.
Lov\'asz's survey \cite{lovasz1993} gives classical background.

Electrical-network methods relate hitting and commute times to voltages
and effective resistance \cite{doyle1984,chandra1996,tetali1991}.
Rayleigh monotonicity therefore makes edge addition look benign at the
level of effective resistance. Cover time is different: it is governed
by joint avoidance of many targets, and adding one edge can either
increase or decrease it. Barlow, Ding, Nachmias, and Peres proved that a
single inserted edge can increase the worst-start cover time, though by
at most a universal factor \cite{barlow2011}. Ding, Lee, and Peres later
related cover time, up to universal constants, to the Gaussian free
field and majorizing measures \cite{dingleeperes2012}. Those global
comparison theories do not determine the exact additive response to one
inserted edge.

Exact distributional formulae for cover time on arbitrary graphs were
given by Zlatanov and Kocarev \cite{zlatanov2009}. The present problem
is more specific: we compare the same starting state before and after a
rank-one graph perturbation and ask what information can determine the
sign of that change.

Let \(G=(V,E)\) be a finite connected simple undirected graph, let
\(uv\notin E\), and write \(G^+=G+uv\). For a start \(s\), set

\begin{equation}
t_{\rm cov}(G,s)=\mathbb E_s\tau_{\rm cov},
\qquad
\delta_s(G;uv)=t_{\rm cov}(G^+,s)-t_{\rm cov}(G,s).
\label{eq:1.1}
\end{equation}

The sign of \(\delta_s\) is the fixed-start response. The first
contribution of this paper is an old-graph-only formula for it. The
formula uses target-set inclusion--exclusion and a Sherman--Morrison
update of grounded Laplacians \cite{sherman1950}; its exponential size
is an exact representation, not a complexity claim.

The principal result shows that no bounded marked neighbourhood decides
the sign. We use the following precise convention. The radius-\(r\)
local datum of \((G;s,u,v)\) is the isomorphism class of the induced
subgraph on vertices at distance at most \(r\) from \(\{s,u,v\}\), with
\(s,u,v\) individually marked and every visible vertex labelled by its
degree in the ambient graph. A rule receiving this datum is not also
given the graph order or other global scalars. The theorem additionally
matches several such scalars, as stated below.

\begin{theorem}[arbitrary-radius nonlocality]\label{thm:1.1} For every integer
\(r\ge1\), there are finite connected simple rooted insertion
configurations \((G_r^-;s,u,v)\) and \((G_r^+;s,u,v)\) such that \(uv\)
is absent in both, their ambient-degree-labelled marked radius-\(r\)
data are isomorphic, and

\begin{equation}
\delta_s(G_r^+;uv)>0>\delta_s(G_r^-;uv).
\label{eq:1.2}
\end{equation}

The construction also matches

\begin{equation}
d_s=d_u=d_v=1,\qquad
\operatorname{dist}(s,\{u,v\})=2,\qquad R_{uv}=2.
\label{eq:1.3}
\end{equation}
\end{theorem}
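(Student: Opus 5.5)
We build both configurations from one shared local gadget and attach two different far-away ``tails'' to it. Only the tails decide the sign.

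\textbf{The gadget.} Take a vertex \(w\) and attach to it three pendant neighbours \(s\), \(u'\) and \(v'\)? No: we need \(d_u=d_v=1\), \(\operatorname{dist}(s,\{u,v\})=2\) and \(R_{uv}=2\). So instead take a central vertex \(c\) with pendant leaves \(u\) and \(v\), and a second pendant leaf \(s\) attached to \(c\). Then every marked vertex has degree one, \(\operatorname{dist}(s,u)=2\), and \(R_{uv}=2\), because the only \(u\)--\(v\) paths run through \(c\) and all graphs considered are trees near \(u,v\) (more precisely, \(u,v\) are leaves on \(c\)). Beyond \(c\) we attach a long path \(c=x_0,x_1,\dots,x_L\) with \(L\gg r\). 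This fixes the degree-labelled radius-\(r\) neighbourhood of \(\{s,u,v\}\): it is the star at \(c\) together with the initial segment of the path, all degrees being \(2\) there. The two graphs \(G_r^\pm\) differ only at distance \(>r+1\), through the structure hung at \(x_L\). We take \(L=r+2\), say, and then choose what lies beyond.

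\textbf{Sign mechanism.} Inserting \(uv\) has two effects.
\begin{itemize}
\item It creates the triangle \(c,u,v\). Once the walk has covered \(u\) and \(v\), this triangle acts as a trap that wastes time near \(c\). This effect pushes \(\delta_s\) positive, and it is the high-conductance occupation coefficient, i.e.\ the expected pre-cover occupation of the contracted vertex.
\item It speeds up covering of the pair \(\{u,v\}\): after hitting \(u\) the walk can reach \(v\) directly. This effect pushes \(\delta_s\) negative, and it matters only when \(\{u,v\}\) tends to be among the last targets covered.
\end{itemize}
So the plan is to choose the two far tails as follows.
\begin{itemize}
\item For \(G_r^+\), hang at \(x_L\) a huge structure that takes very long to cover, for example a long path of length \(M\) or a large clique. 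The walk almost surely covers \(u\) and \(v\) early, and afterwards every return to \(c\) wastes extra time in the triangle while the remote tail is covered. We expect \(\delta_s\) to be of order a positive constant times the number of visits to \(c\) before cover, which is large.
\item For \(G_r^-\), hang at \(x_L\) a tiny structure, for example just end the path, so that the graph is essentially the star plus a short path. Here the last targets covered are frequently \(u\) or \(v\), and the shortcut should dominate.
\end{itemize}
The two graphs then have different orders, consistent with the remark in the abstract.

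\textbf{Verifying the signs.} For \(G_r^-\) the graph is an explicit small tree: a spider with legs of lengths \(1,1,1,L\). We compute \(\delta_s\) exactly, either from the paper's old-graph-only formula (killed Green matrix update plus target-set inclusion--exclusion) or directly via last-vertex decompositions on the path. We need negativity for every \(r\), so we want a closed form in \(L\). If the bare spider fails to be negative, we adjust the tail, for instance by adding further pendant leaves at \(x_L\) of degree labelled only outside radius \(r\), until it holds.

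For \(G_r^+\) we argue asymptotically as \(M\to\infty\). Write \(\delta_s\) as the expected extra time spent in the triangle after \(\{u,v\}\) is covered, minus a savings term bounded by the expected time to cover \(\{u,v\}\) from \(c\). The savings term is \(O(1)\) uniformly in \(M\). The extra term grows, because the number of excursions to \(c\) before covering the tail grows linearly in \(M\), and each excursion incurs a positive expected delay once the edge \(uv\) exists. We then pick \(M\) large.

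\textbf{Main obstacle.} The hard step is the exact negative sign for the small graph \(G_r^-\), uniformly in \(r\) (equivalently in \(L\)). Cover-time differences on spiders involve delicate inclusion–exclusion cancellations. To handle this we would first compute the path-with-pendants family exactly, reusing the paper's solved example of a path with two pendant insertion endpoints, and read off a start \(s\) in the negative region of its sign change. We would then embed that example as \(G_r^-\) and attach the local star structure consistently.
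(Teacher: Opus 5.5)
There is a genuine gap, and it lies on the side you flagged as hard: your \(G_r^-\) has the wrong sign. Take the spider with legs \(1,1,1,L\), started at the leaf \(s\). The cut-vertex identity of Theorem~\ref{thm:3.2} applies with \(H\) the path, \(T=\Theta_L\sim\operatorname{Exp}(1/L)\), \(J=L^2\) and \(m=L+3\). It gives
\[
\delta_s=\frac{2(6L^3+5L^2-23L-10)}{3(L+1)(3L+2)}.
\]
This is \(-22/15\) at \(L=1\) but \(+1/3\) at \(L=2\), and it is positive for all \(L\ge2\); both small values can be confirmed by direct first-step analysis. Matching the balls forces \(L\ge r\), and you take \(L=r+2\), so the bare spider is positive for every \(r\).

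The heuristic ``tiny tail, so the shortcut dominates'' misreads how the saving scales. The shortcut saves departures from \(c\), and each saved departure is worth a mean excursion of \(2m/d_c\) physical steps. So the saving is of order \(m\,\mathbb E e^{-T}\), the edge count times roughly the chance that the local pair finishes last. The triangle costs about one extra step per departure into \(\{u,v\}\), roughly \(2\mathbb ET\). With few edges the cost wins.

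The missing idea is to make \(m\) large while keeping the remote part quick to cover, measured in departures from \(c\). The paper builds a clique \(K_{M+1}\) through \(c\) into both graphs; it lies within distance \(2\) of the marks. This gives \(m\approx M^2/2\), \(\mathbb ET\le r+H_M\) and \(\mathbb Ee^{-T}\ge1/((r+1)(M+1))\), hence \(\delta/2<-5r-6\) for \(M=36(r+1)^2\). The two graphs then differ only in a pendant path of length \(r\) versus \(R=M(M+1)+5\). Neither of your fallbacks supplies this. Adding leaves at \(x_L\) ``until it holds'' has no argument behind it. In \(Y_L\), a negative start that is a leaf adjacent to \(c\) exists only for \(L=1\), which is \(K_{1,3}\) and cannot carry the path your ball requires.

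Your positive side also rests on wrong estimates, although the long-path option does end up positive. A large clique hung at \(x_L\) would make the response \emph{negative}. As the clique grows, \(\mathbb ET\) stays bounded, because one excursion reaching \(x_L\) covers the clique with probability tending to one. Meanwhile \(\mathbb Ee^{-T}\) stays bounded below and \(m\to\infty\). Such a clique costs physical time but not visits to \(c\), and visits to \(c\) are what drive the positive term.

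The saving is also not \(O(1)\). The old expected time to cover \(\{u,v\}\) from \(c\) is \(3m-1\). For a tail path of total length \(N\), the terminal correction \(2\mathbb E[J(e^{-T}-e^{-3T/2})]\) alone is about \(2N/3\). The long-path graph is positive only because \(2\mathbb ET\approx2N\) beats this, which needs exact accounting such as Theorem~\ref{thm:3.2}.

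Your star-plus-path ball can be kept if you reverse the roles: hang a huge clique beyond radius \(r\) for \(G_r^-\), and a long path for \(G_r^+\).
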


The two graphs have different orders. Thus Theorem~\ref{thm:1.1} excludes rules
based on the stated local datum, even if augmented by the matched
scalars in \eqref{eq:1.3}, but it does not exclude a rule using local data
together with graph order. Equal-order nonlocality remains open.

A second structural result concerns conductance interpolation. Give the
new edge conductance \(\lambda\ge0\) and write
\(f_s(\lambda)=t_{\rm cov}(G+\lambda uv,s)\). Each target contribution
is quadratic-over-linear, and the global response divided by \(\lambda\)
is a finite sum of simple poles on the negative real axis. Its
coefficient at infinity has a probabilistic meaning: after contracting
\(u,v\) to \(w\), it is \(2/(d_u+d_v)\) times the expected number of
visits to \(w\) before cover. This coefficient is nonnegative, and we
classify its zero case exactly.

Finally, we solve a path with two pendant insertion endpoints for every
start. That family gives both signs within one graph and proves
unit-conductance strictness there. Worst-start relocation and the
unresolved global equality questions are kept as consequences and open
problems rather than used as the main narrative.

The component tools have classical origins: target-set
inclusion--exclusion is classical in spirit, and Sherman--Morrison is a
standard rank-one inverse identity. The contribution here is their
combination into an exact old-graph-only perturbation formula and the
nonlocality and contraction--occupation consequences derived from it. We
do not claim either ingredient alone as a principal novelty.

The paper is organised as follows. Section 2 derives the exact
perturbation formula. Section 3 proves Theorem~\ref{thm:1.1}. Section 4 develops
conductance interpolation and the contraction--occupation theorem.
Section 5 treats the solved pendant-path family. Section 6 records the
short worst-start consequence and the delimited equality evidence.
Section 7 concludes with open problems.

\section{Exact fixed-start perturbation}

\subsection{Target-set inclusion--exclusion}

For each nonempty \(S\subseteq V\setminus\{s\}\), define

\[
T_S=\inf\{t\ge0:X_t\in S\},
\qquad
H_s(S)=\mathbb E_sT_S.
\]

Finite inclusion-exclusion applied at each time, followed by the
convergent tail sum, gives

\begin{equation}
\boxed{
t_{\mathrm{cov}}(G,s)
=
\sum_{\varnothing\neq S\subseteq V\setminus\{s\}}
(-1)^{|S|+1}H_s(S).
}
\label{eq:2.1}
\end{equation}

\begin{lemma}[target-set inclusion-exclusion]\label{lem:2.1} Formula \eqref{eq:2.1}
holds.

\end{lemma}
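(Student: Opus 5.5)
The plan is to write both sides as sums over time of tail probabilities and compare them term by term. Let \(U=V\setminus\{s\}\), and for \(x\in U\) let \(T_x=T_{\{x\}}\). Because the walk starts at \(s\), the cover time is \(\tau_{\rm cov}=\max_{x\in U}T_x\). Also \(T_S=\min_{x\in S}T_x\) for every nonempty \(S\subseteq U\). We may assume \(U\neq\varnothing\); otherwise both sides vanish.

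The first step is the pointwise identity. For a fixed time \(t\ge0\), set \(A_x=\{T_x>t\}\). Then \(\{\tau_{\rm cov}>t\}=\bigcup_{x\in U}A_x\) and \(\bigcap_{x\in S}A_x=\{T_S>t\}\). Finite inclusion--exclusion applied to the indicator functions gives
\[
\mathbf 1\{\tau_{\rm cov}>t\}=\sum_{\varnothing\neq S\subseteq U}(-1)^{|S|+1}\mathbf 1\{T_S>t\}.
\]
This is the max--min identity \(\max_x T_x=\sum_S(-1)^{|S|+1}\min_{x\in S}T_x\) for finitely many reals, stated one level at a time. Taking \(\mathbb P_s\) of both sides yields the same identity for the tail probabilities \(\mathbb P_s(\tau_{\rm cov}>t)\) and \(\mathbb P_s(T_S>t)\).

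The second step sums over \(t\ge0\). We use \(\mathbb E_s Y=\sum_{t\ge0}\mathbb P_s(Y>t)\) for nonnegative integer-valued \(Y\). The only point needing care is exchanging the infinite sum over \(t\) with the signed finite sum over \(S\), and this is where finiteness enters. The graph is finite and connected, so the walk is irreducible on a finite state space. Hence \(\mathbb E_s\tau_{\rm cov}<\infty\); indeed the tails decay geometrically, by the standard bound that from any state the walk covers within \(|V|\) steps with probability at least some \(c>0\). Since \(T_S\le\tau_{\rm cov}\), each series \(\sum_t\mathbb P_s(T_S>t)\) is dominated by \(\sum_t\mathbb P_s(\tau_{\rm cov}>t)<\infty\). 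A finite linear combination of convergent series may be summed termwise. This gives
\[
t_{\rm cov}(G,s)=\sum_{\varnothing\neq S\subseteq U}(-1)^{|S|+1}H_s(S),
\]
which is \eqref{eq:2.1}.

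The main obstacle, mild as it is, is justifying this exchange: it is why finiteness of all the \(H_s(S)\) must be established before rearranging. The remaining check is the convention for \(s\). Excluding \(s\) from the targets is consistent because \(T_s=0\) at time \(0\), so \(s\) never contributes to \(\tau_{\rm cov}\). Also \(T_S\) uses \(t\ge0\), but \(s\notin S\), so \(T_S\ge1\) and no hitting time vanishes spuriously.
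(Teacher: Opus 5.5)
Your argument is correct and is essentially the paper's proof: you apply inclusion--exclusion pointwise to the events \(\{T_x>t\}\), sum the tail probabilities over \(t\), and justify the exchange by the finiteness of the subset sum and integrability (your domination \(T_S\le\tau_{\rm cov}\) is a tidy way to get the latter). One small slip is that a covering walk may need more than \(|V|\) steps (from the centre of \(K_{1,n}\) it needs \(2n-1\)), so your geometric-tail bound should use blocks of \(2|E|\) or \(2(|V|-1)\) steps, as in a spanning-tree traversal, which is what the paper does.
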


\begin{proof} For \(x\ne s\), write \(A_x(t)=\{T_x>t\}\). The walk has
not covered the graph by time \(t\) precisely on
\(\bigcup_{x\ne s}A_x(t)\). Finite inclusion-exclusion gives

\[
\mathbb P_s(\tau_{\mathrm{cov}}>t)
=\sum_{\varnothing\ne S\subseteq V\setminus\{s\}}
(-1)^{|S|+1}\mathbb P_s(T_S>t).
\]

Summing over \(t\ge0\) and using
\(\mathbb E T=\sum_{t\ge0}\mathbb P(T>t)\) proves \eqref{eq:2.1}. The exchange is
legitimate because the subset sum is finite and all hitting times have a
geometric tail: in every block of \(2|E|\) steps there is a
graph-dependent positive probability of following a fixed spanning-tree
traversal. 
\end{proof}

\subsection{Killed Green matrices}

Fix nonempty \(S\subseteq V\setminus\{s\}\) and put \(B=V\setminus S\).
Then

\[
L_B:=L[B,B]\succ0.
\]

Indeed, extending \(x\in\mathbb R^B\) by zero on \(S\) gives

\[
x^{\mathsf T}L_Bx
=
\sum_{\{p,q\}\in E}(\widetilde x_p-\widetilde x_q)^2,
\]

which vanishes only for \(x=0\). Hence

\[
K_S:=L_B^{-1}
\]

exists. Moreover,

\[
K_S
=(I-P_{BB})^{-1}D_B^{-1}
=
\sum_{t\ge0}P_{BB}^{t}D_B^{-1}\ge0
\]

entrywise. Its probabilistic interpretation is

\begin{equation}
(K_S)_{xy}
=
\frac1{d_y}
\mathbb E_x\!\left[
\sum_{t=0}^{T_S-1}\mathbf1_{\{X_t=y\}}
\right].
\label{eq:2.2}
\end{equation}

The set-hitting vector is

\begin{equation}
h_B^S=K_Sd_B.
\label{eq:2.3}
\end{equation}

After the insertion,

\[
L_B'=L_B+b_Bb_B^{\mathsf T},
\qquad
d_B'=d_B+a_B,
\]

and therefore

\begin{equation}
(K_S)'
=
K_S-
\frac{K_Sb_Bb_B^{\mathsf T}K_S}
{1+b_B^{\mathsf T}K_Sb_B}.
\label{eq:2.4}
\end{equation}

It follows that

\begin{equation}
\boxed{
F_s(S):=H_s'(S)-H_s(S)
=
e_{s,B}^{\mathsf T}K_Sa_B
-
\frac{
(e_{s,B}^{\mathsf T}K_Sb_B)
\bigl(b_B^{\mathsf T}K_S(d_B+a_B)\bigr)
}{1+b_B^{\mathsf T}K_Sb_B}.
}
\label{eq:2.5}
\end{equation}

\begin{lemma}[killed Green update]\label{lem:2.2} For nonempty
\(S\subseteq V\setminus\{s\}\), the matrix \(L_B\) is positive definite,
\(h_B^S=K_Sd_B\), and \eqref{eq:2.4}--\eqref{eq:2.5} hold.

\end{lemma}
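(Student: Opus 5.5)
The proof has three independent parts, following the order of the statement: positive definiteness of \(L_B\) and the hitting identity \(h_B^S=K_Sd_B\); the Sherman--Morrison form \eqref{eq:2.4}; and the algebra giving \eqref{eq:2.5}. Throughout, \(b_B\) is the restriction to \(B\) of \(e_u-e_v\), and \(a_B\) is the restriction of \(e_u+e_v\). Then \(L^+=L+(e_u-e_v)(e_u-e_v)^{\mathsf T}\) and \(d^+=d+e_u+e_v\), and restricting to \(B\times B\) gives \(L_B'=L_B+b_Bb_B^{\mathsf T}\) and \(d_B'=d_B+a_B\). This restriction is valid even if one or both of \(u,v\) lie in \(S\), because restriction commutes with these rank-one terms.

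\textbf{Positive definiteness and the hitting identity.} Expanding \(x^{\mathsf T}L_Bx\) as the Dirichlet form of the zero extension \(\widetilde x\) shows it vanishes only when \(\widetilde x\) is constant on each component. Since \(G\) is connected and \(\widetilde x=0\) on the nonempty set \(S\), this forces \(x=0\). Next write \(L_B=D_B(I-P_{BB})\). Because the walk is absorbed in \(S\) with positive probability from every state, \(P_{BB}\) is substochastic with spectral radius less than one; this follows from the geometric tail argument of Lemma~\ref{lem:2.1}. Hence the Neumann series converges and gives the entrywise nonnegative representation. Reading \((P_{BB}^t)_{xy}\) as \(\mathbb P_x(X_t=y,\,T_S>t)\) yields \eqref{eq:2.2}. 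Summing \eqref{eq:2.2} against \(d_B\) gives \((K_Sd_B)_x=\sum_y\mathbb E_x[\#\text{visits to }y\text{ before }T_S]=\mathbb E_xT_S\), which is \eqref{eq:2.3}. Equivalently, \(h\) is the unique solution of the first-step equations \(L_Bh=d_B\).

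\textbf{Sherman--Morrison update.} The denominator satisfies \(1+b_B^{\mathsf T}K_Sb_B\ge1>0\) because \(K_S\succ0\). So \eqref{eq:2.4} is the standard Sherman--Morrison identity, checked directly by multiplying by \(L_B+b_Bb_B^{\mathsf T}\). The same argument applied to \(G^+\) (still connected) shows \(L_B'\succ0\), and the hitting identity gives \(h'=(K_S)'(d_B+a_B)\).

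\textbf{Deriving \eqref{eq:2.5}.} Take the \(s\)-entry, which is legitimate since \(s\in B\):
\[
H_s'(S)=e_{s,B}^{\mathsf T}(K_S)'(d_B+a_B).
\]
Substitute \eqref{eq:2.4} and subtract \(H_s(S)=e_{s,B}^{\mathsf T}K_Sd_B\). The unperturbed term contributes \(e_{s,B}^{\mathsf T}K_Sa_B\), and the correction term is exactly the displayed quotient. This uses the symmetry of \(K_S\) to write \(b_B^{\mathsf T}K_S\) on the right factor.

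The only point needing genuine care is the probabilistic identification \eqref{eq:2.2}, namely the convergence of the Neumann series. The algebraic steps are routine. The edge cases \(u\in S\) or \(v\in S\) need one explicit check that the restricted vectors \(a_B,b_B\) still give the correct updates. In those cases \(b_B\) may be a single \(\pm e\) vector or zero, and the formulas remain valid verbatim.
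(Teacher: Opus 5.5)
Your proposal is correct and follows the paper's proof essentially step for step: Dirichlet-form positivity, the factorisation \(L_B=D_B(I-P_{BB})\) with its Neumann series, the hitting identity, Sherman--Morrison, and extraction of the \(s\)-coordinate. You reach the hitting identity by summing the occupation interpretation against \(d_B\), where the paper uses the first-step equations \(L_Bh_B^S=d_B\), and you additionally justify convergence of the Neumann series and positivity of the denominator, which the paper leaves implicit; one harmless inaccuracy is that symmetry of \(K_S\) is not needed for \eqref{eq:2.5}, since \(e_{s,B}^{\mathsf T}K_Sb_Bb_B^{\mathsf T}K_S(d_B+a_B)\) already factors as the displayed product.
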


\begin{proof} Extending \(x\in\mathbb R^B\) by zero on \(S\) gives the
displayed Dirichlet-energy identity, so connectedness implies
\(L_B\succ0\). Since \(L_B=D_B(I-P_{BB})\),

\[
K_S=(I-P_{BB})^{-1}D_B^{-1}
=\sum_{t\ge0}P_{BB}^tD_B^{-1}.
\]

The first-step equations for \(h_x^S=\mathbb E_xT_S\) are
\(h_x^S=1+\sum_{y\in B}P_{xy}h_y^S\), equivalently \(L_Bh_B^S=d_B\);
hence \(h_B^S=K_Sd_B\). The inserted edge changes the killed Laplacian
and degree vector to \(L_B+b_Bb_B^{\mathsf T}\) and \(d_B+a_B\).
Sherman-Morrison gives \eqref{eq:2.4}. Multiplying \eqref{eq:2.4} by \(d_B+a_B\),
subtracting \(K_Sd_B\), and taking coordinate \(s\) gives \eqref{eq:2.5}.

\end{proof}

Every quantity on the right belongs to \(G\).

\subsection{The perturbation theorem}

\begin{theorem}[exact fixed-start perturbation]\label{thm:2.3} Combining \eqref{eq:2.1}
and \eqref{eq:2.5},

\begin{equation}
\boxed{
\delta_s
:=
t_{\mathrm{cov}}(G^+,s)-t_{\mathrm{cov}}(G,s)
=
\sum_{\varnothing\neq S\subseteq V\setminus\{s\}}
(-1)^{|S|+1}F_s(S).
}
\label{eq:2.6}
\end{equation}

This representation contains \(2^{n-1}-1\) subset terms. That count
describes this inclusion-exclusion representation only; no minimality
claim is made.

\end{theorem}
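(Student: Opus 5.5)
The plan is to apply Lemma~\ref{lem:2.1} twice, once to \(G\) and once to \(G^+\), subtract term by term, and identify each difference with \(F_s(S)\) via Lemma~\ref{lem:2.2}.

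\textbf{Step 1: expand both cover times.} The graph \(G^+=G+uv\) is again a finite connected simple graph on the same vertex set \(V\), with the same start \(s\). Lemma~\ref{lem:2.1} therefore applies verbatim to \(G^+\) and gives
\[
t_{\mathrm{cov}}(G^+,s)=\sum_{\varnothing\ne S\subseteq V\setminus\{s\}}(-1)^{|S|+1}H_s'(S),
\]
where \(H_s'(S)\) is the set-hitting expectation in \(G^+\). The index set is identical to the one in \eqref{eq:2.1}, because only the edge set changes.

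\textbf{Step 2: subtract.} Both sums are finite, so subtracting \eqref{eq:2.1} term by term gives
\[
\delta_s=\sum_S(-1)^{|S|+1}\bigl(H_s'(S)-H_s(S)\bigr),
\]
with no convergence issue.

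\textbf{Step 3: identify each summand.} For each \(S\) I would check that \(H_s'(S)-H_s(S)=F_s(S)\) as given by \eqref{eq:2.5}. This is the one step needing care, because \(S\) may contain \(u\), \(v\), or both, and the vectors \(a_B\) and \(b_B\) must be interpreted accordingly. The general definitions are
\[
a_B=(\mathbf 1_u+\mathbf 1_v)|_B,\qquad b_B=(\mathbf 1_u-\mathbf 1_v)|_B.
\]
The three cases are as follows.
\begin{itemize}
\item If \(u,v\in B\), then \(L_B'=L_B+b_Bb_B^{\mathsf T}\) exactly.
\item If exactly one endpoint, say \(u\), lies in \(B\), then the new edge adds \(1\) to the \((u,u)\) entry of \(L_B\). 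This equals \(b_Bb_B^{\mathsf T}\) with \(b_B=e_u\), and \(a_B=e_u\) matches the degree increment.
\item If \(u,v\in S\), then \(b_B=a_B=0\). The killed walk never uses the edge \(uv\), since it is absorbed at \(u\) or \(v\) first, so \(F_s(S)=0\) and \eqref{eq:2.5} returns \(0\).
\end{itemize}

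In every case the hypothesis of Lemma~\ref{lem:2.2} holds, since \(s\in B\) and \(S\ne\varnothing\). Sherman--Morrison requires \(1+b_B^{\mathsf T}K_Sb_B>0\), which holds because \(K_S\succ0\). Hence \(H_s'(S)=e_{s,B}^{\mathsf T}(K_S)'(d_B+a_B)\), and \eqref{eq:2.5} follows as in the proof of Lemma~\ref{lem:2.2}.

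\textbf{Step 4: the count.} The nonempty subsets of \(V\setminus\{s\}\), which has \(n-1\) elements, number \(2^{n-1}-1\). This is only a count of terms in the representation, with no minimality claim.

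The main obstacle is the bookkeeping of Step 3 when \(u\) or \(v\) lies in \(S\). Everything else is direct substitution of the earlier lemmas.
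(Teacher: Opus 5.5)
Your proposal is correct and follows the paper's own proof: you apply Lemma~\ref{lem:2.1} to both $G$ and $G^+$, subtract the two finite sums over the common index set, and identify each summand with $F_s(S)$ via the Sherman--Morrison update of Lemma~\ref{lem:2.2}. Your explicit case analysis for $a_B$ and $b_B$ when $u$ or $v$ lies in $S$ is bookkeeping that the paper leaves implicit in Lemma~\ref{lem:2.2} and carries out later in the proof of Proposition~\ref{prop:2.4}.
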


\begin{proof} Apply Lemma~\ref{lem:2.1} to \(G^+\) and \(G\), subtract the two
finite sums, and use Lemma~\ref{lem:2.2} for each target set \(S\). Every matrix
in the result is a killed Green matrix of the old graph. 
\end{proof}

\subsection{Endpoint location and the alternating-sign obstruction}

If \(u,v\in S\), then

\begin{equation}
F_s(S)=0.
\label{eq:2.7}
\end{equation}

If exactly one endpoint is outside \(S\), call it \(w\). Then

\begin{equation}
\boxed{
F_s(S)
=
\frac{K_{sw}(1-h_w^S)}{1+K_{ww}}
\le0.
}
\label{eq:2.8}
\end{equation}

If \(u,v\notin S\), then

\begin{equation}
\boxed{
\begin{aligned}
F_s(S)
={}&K_{su}+K_{sv}\\
&-
\frac{(K_{su}-K_{sv})
\bigl(h_u^S-h_v^S+K_{uu}-K_{vv}\bigr)}
{1+K_{uu}+K_{vv}-2K_{uv}}.
\end{aligned}
}
\label{eq:2.9}
\end{equation}

The latter has no universal sign.

\begin{proposition}[endpoint-location formulae]\label{prop:2.4} Equations
\eqref{eq:2.7}--\eqref{eq:2.9} hold.

\end{proposition}

\begin{proof} If \(u,v\in S\), then \(a_B=b_B=0\), so \eqref{eq:2.5} vanishes. If
exactly one endpoint, say \(w\), lies in \(B\), then \(a_B=e_w\) and
\(b_B=\pm e_w\). Substitution into \eqref{eq:2.5}, together with
\(h_w^S=e_w^{\mathsf T}K_Sd_B\), gives

\[
F_s(S)=K_{sw}-
\frac{K_{sw}(h_w^S+K_{ww})}{1+K_{ww}}
=\frac{K_{sw}(1-h_w^S)}{1+K_{ww}}.
\]

Because \(w\notin S\), at least one step is required to hit \(S\), so
\(h_w^S\ge1\); also \(K_{sw}\ge0\). This proves \eqref{eq:2.8}. If both endpoints
lie in \(B\), then \(a_B=e_u+e_v\), \(b_B=e_u-e_v\), and direct
expansion of \eqref{eq:2.5} gives \eqref{eq:2.9}. 
\end{proof}

The sign in \eqref{eq:2.8} does not imply \(\delta_s\le0\): even-cardinality
target sets enter \eqref{eq:2.6} with a minus sign and therefore reverse the
contribution.

Let \(\mathcal O_k\) and \(\mathcal E_k\) be the odd- and
even-cardinality target sets satisfying \(|S\cap\{u,v\}|=k\). Put

\[
q_s(S)
=
\frac{K_{sw}(h_w^S-1)}{1+K_{ww}}
\ge0
\qquad(S\in\mathcal O_1\cup\mathcal E_1),
\]

and let \(g_s^{(0)}(S)\) denote the right side of \eqref{eq:2.9}. Then

\begin{equation}
\boxed{
\delta_s=G_{0,s}-Q_{1,s},
}
\label{eq:2.10}
\end{equation}

where

\[
G_{0,s}
=
\sum_{S\in\mathcal O_0}g_s^{(0)}(S)
-
\sum_{S\in\mathcal E_0}g_s^{(0)}(S),
\]

\[
Q_{1,s}
=
\sum_{S\in\mathcal O_1}q_s(S)
-
\sum_{S\in\mathcal E_1}q_s(S).
\]

Therefore

\begin{equation}
\boxed{
\delta_s\lessgtr0
\iff
G_{0,s}\lessgtr Q_{1,s}.
}
\label{eq:2.11}
\end{equation}

This is the exact arbitrary-graph sign criterion. It is algebraic and
generally exponential-size, not a broadly local topological
characterisation.

\section{Arbitrary-radius nonlocality}

\subsection{Exact construction}

Fix an integer \(r\ge1\), and define

\begin{equation}
M=36(r+1)^2,
\qquad
R=M(M+1)+5.
\label{eq:3.1.1}
\end{equation}

The constants are chosen for transparent uniform inequalities and are
not optimised.

For integers \(M,L\ge1\), let

\[
Q=\{q_1,\ldots,q_M\},
\qquad
P_L=\{p_1,\ldots,p_L\},
\]

all disjoint from the four named vertices \(c,s,u,v\). Define
\(X_{M,L}\) by

\begin{equation}
V(X_{M,L})
=\{c,s,u,v\}\sqcup Q\sqcup P_L
\label{eq:3.1.2}
\end{equation}

and

\begin{equation}
\boxed{
\begin{aligned}
E(X_{M,L})={}&
\binom{Q\cup\{c\}}2
\cup\{cs,cu,cv\}\\
&\cup\{cp_1\}
\cup\{p_jp_{j+1}:1\le j<L\}.
\end{aligned}}
\label{eq:3.1.3}
\end{equation}

Thus \(c\) \textbf{is one of the vertices of the \(K_{M+1}\) core}: the
induced graph on \(Q\cup\{c\}\) is complete. The vertices \(s,u,v\) are
three pendant leaves, each attached directly to \(c\). The additional
path is pendant at \(c\): it is

\[
c-p_1-p_2-\cdots-p_L.
\]

Throughout this paper, ``path length \(L\)'' means exactly \(L\) edges;
the path contributes the \(L\) new vertices \(p_1,\ldots,p_L\).

The two rooted insertion configurations are

\begin{equation}
(G_r^-;s,u,v)=(X_{M,r};s,u,v),
\qquad
(G_r^+;s,u,v)=(X_{M,R};s,u,v).
\label{eq:3.1.4}
\end{equation}

The starting vertex is \(s\), and the inserted edge is the genuine
nonedge \(uv\). Their exact orders are

\begin{equation}
\boxed{
|V(G_r^-)|=M+r+4,
\qquad
|V(G_r^+)|=M+R+4.
}
\label{eq:3.1.5}
\end{equation}

Since \(R>r\), the orders are not equal.

The old edge count of \(X_{M,L}\) is

\begin{equation}
m(M,L)=\binom{M+1}{2}+L+3
=\frac{M(M+1)}2+L+3.
\label{eq:3.1.6}
\end{equation}

\subsection{\texorpdfstring{Formal radius-\(r\) local
equivalence}{Formal radius-r local equivalence}}

\begin{definition}[local information]\label{def:3.1} For a marked configuration
\((G;s,u,v)\), its radius-\(r\) local datum is the isomorphism class of
the induced subgraph on

\[
B_r(G;\{s,u,v\})
=\{x:\operatorname{dist}_G(x,\{s,u,v\})\le r\},
\]

with \(s,u,v\) preserved as three individual marks and every visible
vertex labelled by its degree in the full ambient graph. Unless
explicitly stated, this datum contains neither the graph order nor any
other global scalar.
\end{definition}

Let \(\mathcal M=\{s,u,v\}\). In either graph, the exact distances from
the marked set are

\begin{equation}
\operatorname{dist}(x,\mathcal M)=
\begin{cases}
0,&x\in\{s,u,v\},\\
1,&x=c,\\
2,&x\in Q,\\
j+1,&x=p_j.
\end{cases}
\label{eq:3.2.1}
\end{equation}

These distances follow directly from \eqref{eq:3.1.3}: every marked vertex is
pendant at \(c\), every \(q_i\) is adjacent to \(c\), and the only route
from \(p_j\) to the marked set passes through the \(j\)-edge path to
\(c\) and then one marked edge.

\subsubsection{The boundary radius one}

Equation \eqref{eq:3.2.1} gives

\[
V(B_1)=\{s,u,v,c\}
\]

in both configurations. The induced graph is the three-leaf claw with
centre \(c\). The ambient degrees in the full graph are

\begin{equation}
d_s=d_u=d_v=1,
\qquad
d_c=M+4,
\label{eq:3.2.2}
\end{equation}

because \(c\) is adjacent to the \(M\) clique vertices, the three marked
leaves, and \(p_1\). Hence the identity on \(\{s,u,v,c\}\) is an
isomorphism of marked, ambient-degree-labelled radius-one balls. Neither
\(Q\) nor \(p_1\) is visible when \(r=1\).

\subsubsection{Radii at least two}

Both path lengths in \eqref{eq:3.1.4} are at least \(r\). Hence \eqref{eq:3.2.1} gives, in
both graphs,

\begin{equation}
V(B_r)
=\{s,u,v,c\}
\cup Q
\cup\{p_1,\ldots,p_{r-1}\}.
\label{eq:3.2.3}
\end{equation}

The identity on the named vertices preserves the induced edges: it
identifies the complete core \(Q\cup\{c\}\), the three marked pendant
edges, and the visible initial path segment

\[
c-p_1-\cdots-p_{r-1}.
\]

The ambient degrees are exactly

\begin{equation}
\boxed{
\begin{aligned}
d_s=d_u=d_v&=1,\\
d_c&=M+4,\\
d_{q_i}&=M\quad(1\le i\le M),\\
d_{p_j}&=2\quad(1\le j\le r-1).
\end{aligned}}
\label{eq:3.2.4}
\end{equation}

The boundary label requires care. The last visible path vertex is
\(p_{r-1}\). It has an edge to the invisible vertex \(p_r\) in both
configurations: in \(G_r^-\), \(p_r\) is the terminal path vertex, while
in \(G_r^+\) it is an internal path vertex. Thus \(d_{p_{r-1}}=2\) in
both full graphs. The label is not mistakenly computed inside the
induced ball.

Equations \eqref{eq:3.2.2}--\eqref{eq:3.2.4} prove the required marked,
ambient-degree-labelled isomorphism

\begin{equation}
\boxed{
B_r(G_r^-;\{s,u,v\})
\cong
B_r(G_r^+;\{s,u,v\}).
}
\label{eq:3.2.5}
\end{equation}

The prospective nonedge \(uv\) and the individual labels \(s,u,v\) are
preserved.

\begin{figure}
\centering
\includegraphics[width=0.94\textwidth,keepaspectratio]{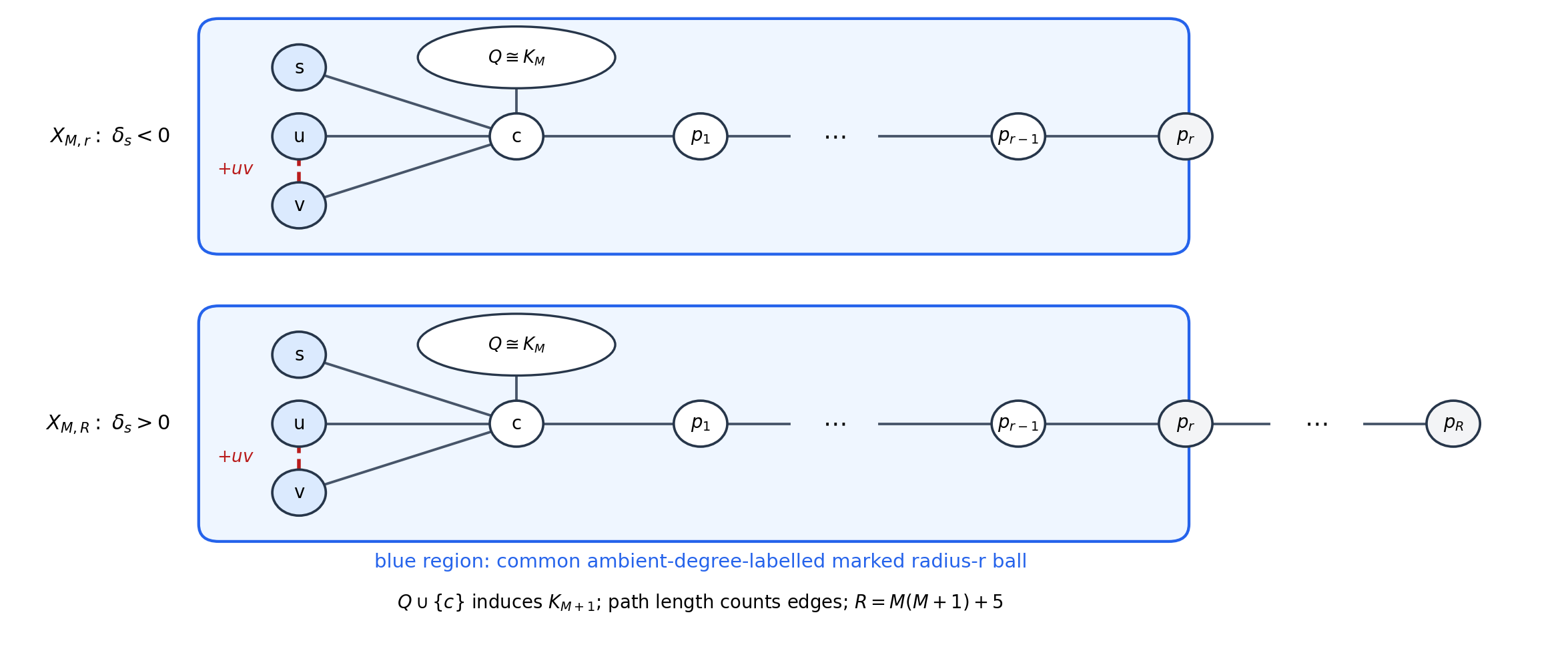}
\caption{Arbitrary-radius construction. The blue region is the common ambient-degree-labelled marked radius-$r$ ball; the terminal distinguishing structure lies outside it.}
\label{fig:arbitrary-radius-construction}
\Description{Two horizontal graph constructions share the same blue radius-$r$ neighbourhood around the marked leaves $s$, $u$, and $v$. The upper graph ends at $p_r$ and has negative cover-time response; the lower graph continues to $p_R$ beyond the blue region and has positive response.}
\end{figure}

\subsection{Matched scalar invariants}

The marked degrees have already been computed:

\[
d_s=d_u=d_v=1
\]

in both graphs. Since each pair of distinct marked leaves is joined
through \(c\) by a two-edge path,

\begin{equation}
\operatorname{dist}(s,\{u,v\})=2.
\label{eq:3.3.1}
\end{equation}

For effective resistance, the edges \(uc\) and \(cv\) are
unit-resistance bridges. Every other part of the graph is attached to
the \(u\)-\(v\) network only at the single terminal \(c\). In the unit
\(u\)-to-\(v\) Dirichlet problem, the harmonic extension on every such
one-terminal attached network is constant at the potential of \(c\), so
it carries no current. Equivalently, Thomson's principle deletes these
zero-current branches. Therefore

\begin{equation}
\boxed{R_{uv}=R_{uc}+R_{cv}=1+1=2}
\label{eq:3.3.2}
\end{equation}

in both configurations.

Thus all the requested extra invariants except order agree. The unequal
orders \eqref{eq:3.1.5} are an explicit limitation of the theorem.

\subsection{Exact cut-vertex transfer theorem}

The response separates into two physical effects. First, the old and new
walks accumulate full-excursion reward until both the local insertion
gadget and the remote graph have been covered. Second, full-excursion
accounting overshoots the actual cover time and must subtract the unused
suffix of the excursion in which coverage is completed.

Artificial Poisson time records only the order in which excursions leave
\(c\). It makes the local and remote completion processes independent
and allows the accumulated full-excursion reward and the terminal unused
suffix to be handled separately. Physical random-walk time always
remains the sum of discrete excursion lengths.

\begin{theorem}[local-pair insertion against an arbitrary remote graph]\label{thm:3.2}

Let \(H\) be a finite connected graph with distinguished vertex \(c\),
and assume \(W=V(H)\setminus\{c\}\ne\varnothing\). Form \(G\) by
adjoining three new vertices \(s,u,v\) and the three edges \(cs,cu,cv\),
with no other edges incident to \(s,u,v\). Let

\[
G'=G+uv,
\qquad
m=|E(G)|,
\]

and start at \(s\).

Work on a product probability space. At \(c\), give every directed
departure edge \((c,z)\) an independent rate-one Poisson process. At
each ring, attach an independent mark having the law of a complete
discrete simple-random-walk excursion that starts with \(c\to z\) and
ends on its first return to \(c\). Marks belonging to distinct rings are
independent. The superposed ring order is therefore an i.i.d. uniform
sequence of departure edges, exactly matching successive departures from
\(c\) in the discrete walk.

Let:

\begin{itemize}
\tightlist
\item
  \(T\) be the artificial time at which the remote vertices \(W\) have
  all been visited by remote excursion marks;
\item
  \(A\) be the artificial completion time of the old local pair
  \(\{u,v\}\);
\item
  \(B\) be the artificial completion time of the new local triangle
  \(cuv\);
\item
  \(R_T^{\mathrm{rem}}\) be the physical length of the unused suffix of
  the terminal remote excursion, from the instant at which the last
  unvisited vertex of \(W\) is first visited until return to \(c\);
\item
  \(\mathscr G_T\) be the sigma-field generated by the remote artificial
  ring times, departure labels, and excursion paths only through that
  first completion instant; and
\item
  \(J=\mathbb E[R_T^{\mathrm{rem}}\mid\mathscr G_T]\), the conditional
  expected unused remote suffix.
\end{itemize}

Use independent local marked processes to define \(A\) and \(B\), each
independent of the common remote process defining \(T,J\). Put

\begin{equation}
a=\mathbb E e^{-T},
\qquad
b=\mathbb E e^{-3T/2}.
\label{eq:3.4.1}
\end{equation}

Then \(T,A,B,J\) are integrable, \(a,b\) are finite, and

\begin{equation}
\boxed{
\delta_s(G;uv)
=2\mathbb ET
-2(2m-1)\left(a-\frac23b\right)
-2\mathbb E\!\left[J\left(e^{-T}-e^{-3T/2}\right)\right].
}
\label{eq:3.4.2}
\end{equation}

\end{theorem}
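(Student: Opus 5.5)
The plan is to write the cover time of each graph as a sum of physical excursion lengths from \(c\), indexed by the artificial Poisson clock. I would then evaluate expectations by a Wald-type identity plus a correction for the terminal excursion. Starting at \(s\), the walk first steps to \(c\) (one step, the same in \(G\) and \(G'\)). Afterwards every departure from \(c\) is uniform over the \(d_c\) directed edges in both graphs, since \(uv\) is not incident to \(c\). The superposed rings therefore reproduce the successive departures, and physical time is the sum of the marks' lengths.

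Since the marks are i.i.d. given the labels, the expected physical time accumulated up to artificial time \(t\) is \(t\) times the sum over departure edges of the mean excursion lengths. By Kac's formula this sum is \(d_c\cdot 2m/d_c=2m\) in \(G\) and \(2(m+1)\) in \(G'\). A stopping-time version of this accounting (a compensator/optional-stopping argument for the marked Poisson process) would give the following expected full-excursion reward up to cover completion, that is, through the end of the excursion in which coverage completes:
\[
2m\,\mathbb E\max(T,A)\qquad\text{and}\qquad 2(m+1)\,\mathbb E\max(T,B)
\]
in \(G\) and \(G'\) respectively.

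Subtracting unused suffixes then splits by which component finishes last. If the remote part finishes last, the suffix is \(R_T^{\mathrm{rem}}\), whose conditional mean given \(\mathscr G_T\) is \(J\). Because \(A,B\) are independent of the remote process, this contributes \(\mathbb E[J\,\mathbb P(A<T\mid T)]\), and similarly with \(B\). If the local part finishes last in \(G\), the suffix is the single step \(u\to c\) or \(v\to c\), of length \(1\). In \(G'\) the suffix starts at the first visit to the second endpoint of the triangle excursion, so it is one step back to \(c\) or a further \(uv\) bounce; its expected length is a small explicit constant from the triangle chain. Since \(s\) is already covered, the local laws are explicit:
\begin{itemize}
\tightlist
\item \(A\) is the maximum of two independent \(\mathrm{Exp}(1)\) variables, so \(\mathbb P(A>t)=2e^{-t}-e^{-2t}\);
\item for \(B\), the rings on \(cu\) and \(cv\) have total rate \(2\), and each such excursion crosses \(uv\) (covering both endpoints) with probability \(1/2\). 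This gives \(\mathbb P(B>t)\) as a combination of \(e^{-t}\), \(e^{-3t/2}\) and \(e^{-2t}\), which is where the exponent \(3/2\) in \(b\) comes from.
\end{itemize}

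The next step is to write \(\mathbb E\max(T,X)=\mathbb ET+\mathbb E\int_T^\infty\mathbb P(X>t)\,dt\) by independence, and likewise for \(\mathbb P(X<T\mid T)\). Everything then becomes linear combinations of \(\mathbb Ee^{-T}\), \(\mathbb Ee^{-3T/2}\), \(\mathbb Ee^{-2T}\) and \(\mathbb E[Je^{-\cdot T}]\). Collecting terms, I expect:
\begin{itemize}
\tightlist
\item the \(e^{-2T}\) terms cancel between the two graphs;
\item the \(\mathbb ET\) terms combine to \(2(m+1)\mathbb ET-2m\,\mathbb ET=2\mathbb ET\);
\item the remaining coefficients assemble into \(-2(2m-1)(a-\tfrac23b)\) and \(-2\mathbb E[J(e^{-T}-e^{-3T/2})]\).
\end{itemize}
Integrability follows because \(T\) is dominated by a geometric number of excursion blocks, as in the proof of Lemma~\ref{lem:2.1}. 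Also \(J\le\max_x\mathbb E_xT_c\) is bounded, and \(a,b\le1\).

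The main obstacle is the rigorous justification of the Wald step with the terminal suffix. The excursion during which coverage completes is length-biased, and its prefix is \(\mathscr G_T\)-measurable while its suffix is not. I would handle this by expressing physical cover time as (sum of marks of rings strictly before the completing ring) + (prefix of the completing excursion up to the completing visit). The first sum is a predictable-compensator integral of rate \(2m\) (or \(2m+2\)) on \([0,\max(T,X))\). The full completing excursion is then added and its suffix subtracted, the suffix's conditional law being given by the strong Markov property at the completing visit. This identifies the remote suffix mean as \(J\) and the local suffix as a triangle-chain constant. The independence of the local and remote processes, built into the product space, is what lets \(\mathbb P(X<T\mid\mathscr G_T)=\mathbb P(X<T\mid T)\).
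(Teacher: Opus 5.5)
Your plan is essentially the paper's proof. It uses the same excursion decomposition at \(c\) on the Poisson clock and the same full-excursion rewards \(2m\,\mathbb E\max(T,A)\) and \(2(m+1)\,\mathbb E\max(T,B)\). It subtracts the terminal suffix split by which part finishes last, and finishes with the same Laplace-transform algebra. Your compensator formulation stands in for the paper's Lemma~\ref{lem:3.3} (discrete Wald, via \(\{N\ge i\}\in\mathcal F_{i-1}\)) combined with \(\mathbb EN_Z=d_c\,\mathbb EZ\) from the independent \(\operatorname{Exp}(d_c)\) spacings. Your bound \(J\le\max_x\mathbb E_xT_c\) is a cleaner integrability argument than the paper's.

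\textbf{Correction to your last paragraph.} The compensator identity applies to the predictable integrand \(\mathbf 1_{\{t\le Z\}}\), with \(Z=\max(T,X)\). That is, it applies to the sum over rings at times \(t\le Z\), \emph{including} the completing ring, and this inclusion is exactly what neutralises the length bias (compare Example~\ref{ex:3.4}). The sum over rings strictly before \(Z\) corresponds to \(\mathbf 1_{\{t<Z\}}\), which is not predictable. Its expectation is \(2m\,\mathbb EZ-\mathbb E L_N\), where \(L_N\) is the length-biased terminal length, so it is not \(2m\,\mathbb EZ\). Write the physical cover time as \(1+\sum_{t\le Z}L-(\text{suffix})\), as your earlier paragraph already does, and do not add the completing excursion a second time.

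\textbf{Constants needed to close the algebra.} You need the exact values
\(\mathbb P(B>t)=2e^{-3t/2}-e^{-2t}\) and a triangle suffix mean of \(2\).
There is no \(e^{-t}\) term in \(\mathbb P(B>t)\): from the one-endpoint phase the completion hazard is \(1+\tfrac12\), since a departure to the unseen endpoint always completes. The suffix mean \(2\) comes from \(h=1+h/2\). With a general suffix mean \(\kappa\), the coefficient of \(\mathbb Ee^{-2T}\) in \(\delta_s\) is \(\kappa-2\), so these two constants are precisely what make the \(\mathbb Ee^{-2T}\) terms cancel.
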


\begin{proof}

\textbf{Step 1: filtration and stopping times.}

Let \(\mathscr F_t\) be generated by all Poisson ring times up to
artificial time \(t\), their departure-edge labels, and the
discovered-set/trajectory marks of the corresponding excursions. No
future mark or future excursion-length reward is exposed. The events
\(\{T\le t\}\), \(\{A\le t\}\), and \(\{B\le t\}\) are determined by the
union of the revealed discovered sets, so \(T,A,B\) are stopping times.
Their maxima

\[
Z_0=\max(T,A),
\qquad
Z_1=\max(T,B)
\]

are stopping times as well.

More explicitly, the construction is made on the product space of the
remote process and two independent local processes. Marks are revealed
chronologically. The coverage rules use an excursion path only to
determine which vertices have appeared; they never use an unrevealed
suffix or the numerical value of a length reward to decide whether to
stop. Thus \(T,A,B,Z_0,Z_1\) are stopping times for the marked-process
filtration, not anticipative functionals of future rewards.

All are integrable. To see this without an asymptotic argument, choose
finitely many particular \(c\)-to-\(c\) excursion trajectories whose
combined remote visited sets cover \(W\). A prescribed finite word of
departure labels and these finite trajectories has some probability
\(p>0\) of occurring in any block of sufficiently many successive
departure events. Hence the number of blocks needed to cover \(W\) is
stochastically dominated by a geometric random variable. The artificial
interarrival times have finite means. Thus \(\mathbb E T<\infty\). The
local completion times have the explicit exponential tails below, and
therefore \(Z_0,Z_1\) are integrable. Every finite-state return
excursion also has finite mean physical length. Moreover, the unused
terminal suffix is bounded by the sum of the full excursion lengths
through \(Z_0\) or \(Z_1\); Step 2 proves that this sum has finite
expectation. Thus \(R_T^{\mathrm{rem}}\), and hence \(J\), is
integrable.

The stopping rules use only revealed event times and coverage marks.
They do not inspect any unrevealed future excursion reward.

\textbf{Step 2: stopped full-excursion reward.}

Let \(d_c\) be the degree of \(c\). In \(G\), the superposed departure
process has rate \(d_c\). Write the ordered excursion marks as

\[
\Xi_i=(C_i,L_i),
\]

where \(C_i\) records the departure label and all coverage information
in the \(i\)-th excursion, and \(L_i\ge0\) is its complete physical
length. The marks \((\Xi_i)_{i\ge1}\) are i.i.d. Let
\(\mathcal F_i=\sigma(\Xi_1,\ldots,\Xi_i)\).

\begin{lemma}[stopped marked-excursion reward]\label{lem:3.3} Let
\(\Gamma_n\in\sigma(C_1,\ldots,C_n)\) be an increasing sequence of
completion events, with \(\Gamma_0=\varnothing\), and let
\(N=\inf\{n\ge1:\Gamma_n\text{ occurs}\}\). If \(L_i\ge0\) almost
surely, \(\mathbb EN<\infty\), and \(\mathbb EL_1<\infty\), then

\begin{equation}
\mathbb E\sum_{i=1}^{N}L_i=\mathbb EL_1\,\mathbb EN.
\label{eq:3.4.3}
\end{equation}

\end{lemma}

\begin{proof} Since \(\Gamma_n\in\mathcal F_n\), the index \(N\) is an
\((\mathcal F_i)\)-stopping time. Moreover, for every \(i\ge1\),

\[
\{N\ge i\}=\Gamma_{i-1}^{\mathsf c}\in\mathcal F_{i-1}.
\]

Because the new mark \(\Xi_i\), and hence \(L_i\), is independent of
\(\mathcal F_{i-1}\),

\[
\mathbb E[L_i\mathbf1_{\{N\ge i\}}]
=\mathbb EL_1\,\mathbb P(N\ge i).
\]

Tonelli's theorem now gives

\[
\mathbb E\sum_{i=1}^{N}L_i
=\sum_{i\ge1}\mathbb E[L_i\mathbf1_{\{N\ge i\}}]
=\mathbb EL_1\sum_{i\ge1}\mathbb P(N\ge i)
=\mathbb EL_1\,\mathbb EN.
\]

The terminal excursion length \(L_N\) may be strongly correlated with
the event that coverage is completed at excursion \(N\). Wald's identity
does not require their independence; it uses the independence of the new
mark \(L_i\) from the pre-\(i\) survival event \(\{N\ge i\}\). This is
the elementary stopped-sum argument behind Wald's identity
\cite{gut2009}. 
\end{proof}

\begin{example}[within-mark correlation]\label{ex:3.4} Suppose each mark
succeeds with probability \(p\), a successful mark has length \(100\), a
failed mark has length \(1\), and \(N\) is the first successful mark.
Then \(L_N=100\) identically, so terminal length and completion are
maximally correlated. Nevertheless,

\[
\mathbb E\sum_{i=1}^{N}L_i
=\frac{1-p}{p}+100
=\frac1p+99
=(1+99p)\frac1p
=\mathbb EL_1\,\mathbb EN.
\]
\end{example}

Return-time Kac's formula \cite{norris1997} gives

\begin{equation}
\mathbb EL_1=\frac1{\pi(c)}=\frac{2m}{d_c}.
\label{eq:3.4.4}
\end{equation}

Apply Lemma~\ref{lem:3.3} with \(N=N_Z\), the number of ordered departures
required to reach an integrable coverage stopping time \(Z\) of Step 1.
The geometric-block argument gives \(\mathbb EN_Z<\infty\). Separately,
let \(E_1,E_2,\ldots\) be the interarrival spacings of the superposed
Poisson process. They are i.i.d. \(\operatorname{Exp}(d_c)\) and are
independent of the entire ordered mark sequence. The index \(N_Z\)
depends only on the ordered departure labels and excursion coverage
marks, not on these spacings. Consequently,

\begin{equation}
\mathbb EZ
=\mathbb E\sum_{i=1}^{N_Z}E_i
=\frac{\mathbb EN_Z}{d_c},
\qquad
\mathbb EN_Z=d_c\mathbb EZ.
\label{eq:3.4.5}
\end{equation}

This Poisson-spacing calculation is independent of Lemma~\ref{lem:3.3} and does
not inspect any excursion length. Combining \eqref{eq:3.4.3}--\eqref{eq:3.4.5},

\begin{equation}
\mathbb E\sum_{i=1}^{N_Z}L_i=2m\mathbb EZ.
\label{eq:3.4.5a}
\end{equation}

In \(G'\), the degree of \(c\) is unchanged and the edge count is
\(m+1\), so the corresponding stopped reward is

\begin{equation}
2(m+1)\mathbb EZ.
\label{eq:3.4.5b}
\end{equation}

This is the precise compensation statement needed in the proof.

\textbf{Step 3: local completion laws and terminal corrections.}

Before insertion, the first local departures along \(cu\) and \(cv\)
occur at independent \(\operatorname{Exp}(1)\) times. Therefore

\begin{equation}
\mathbb P(A>t)=2e^{-t}-e^{-2t}.
\label{eq:3.4.6}
\end{equation}

After insertion, while no endpoint has been seen, departures along
\(cu\) or \(cv\) occur at total rate \(2\). From the first endpoint, the
next triangle step goes either to \(c\) or to the other endpoint with
equal probabilities. Hence the continuous-time phase chain jumps from
the no-endpoint state to the one-endpoint state at rate \(1\), and
directly to completion at rate \(1\). From the one-endpoint state, a
departure directly to the unseen endpoint completes at rate \(1\), while
a departure to the seen endpoint crosses to the unseen endpoint before
returning to \(c\) with probability \(1/2\); the total completion hazard
is therefore \(3/2\). If \(p_0(t),p_1(t)\) are the probabilities of the
two transient phases, then

\[
p_0'(t)=-2p_0(t),
\qquad
p_1'(t)=p_0(t)-\frac32p_1(t),
\qquad
(p_0(0),p_1(0))=(1,0).
\]

Thus \(p_0(t)=e^{-2t}\), \(p_1(t)=2(e^{-3t/2}-e^{-2t})\), and

\begin{equation}
\mathbb P(B>t)=2e^{-3t/2}-e^{-2t}.
\label{eq:3.4.7}
\end{equation}

Full-excursion accounting must subtract the unused suffix of the
terminal excursion. If the old local pair finishes last, the last leaf
is first visited after the outward step and the unused return suffix has
length \(1\). If the new local pair finishes last, completion occurs at
\(u\) or \(v\); the expected hitting time of \(c\) from either endpoint
in the triangle \(cuv\) is the solution of \(h=1+h/2\), namely \(2\). If
the remote graph finishes last, conditional expectation with respect to
\(\mathscr G_T\), followed by the tower property, replaces the actual
unused suffix by \(J\). The strong Markov property at the physical
first-completion instant justifies this terminal-return expectation.

The remote and local artificial processes are independent and have
continuous event-time laws, so ties have probability zero. Equations
\eqref{eq:3.4.5a}--\eqref{eq:3.4.7} therefore give the exact old and new cover times

\begin{equation}
\begin{aligned}
C_0={}&1+2m\,\mathbb E\max(T,A)
-\mathbb P(A>T)
-\mathbb E[J\mathbf1_{\{T>A\}}],\\
C_1={}&1+2(m+1)\,\mathbb E\max(T,B)
-2\mathbb P(B>T)
-\mathbb E[J\mathbf1_{\{T>B\}}].
\end{aligned}
\label{eq:3.4.8}
\end{equation}

The leading (1) is the forced initial step \(s\to c\).

\textbf{Step 4: exact algebra.}

For any nonnegative \(X\perp T\), Tonelli's theorem gives

\begin{equation}
\mathbb E\max(T,X)
=\mathbb ET+
\int_0^\infty\mathbb P(X>t)\mathbb P(T\le t)\,dt,
\label{eq:3.4.9}
\end{equation}

and, for \(\lambda>0\),

\begin{equation}
\int_0^\infty e^{-\lambda t}\mathbb P(T\le t)\,dt
=\frac1\lambda\mathbb E e^{-\lambda T}.
\label{eq:3.4.10}
\end{equation}

Write \(c_2=\mathbb E e^{-2T}\). Substitution of \eqref{eq:3.4.6}--\eqref{eq:3.4.7} into
\eqref{eq:3.4.9}--\eqref{eq:3.4.10} yields

\begin{equation}
\begin{aligned}
\mathbb E\max(T,A)&=\mathbb ET+2a-\frac12c_2,\\
\mathbb E\max(T,B)&=\mathbb ET+\frac43b-\frac12c_2,\\
\mathbb P(A>T)&=2a-c_2,\\
\mathbb P(B>T)&=2b-c_2.
\end{aligned}
\label{eq:3.4.11}
\end{equation}

Conditioning on the remote process through \(T\), and using
\eqref{eq:3.4.6}--\eqref{eq:3.4.7},

\begin{equation}
\begin{aligned}
&\mathbb E\!\left[J
(\mathbf1_{\{T>B\}}-\mathbf1_{\{T>A\}})
\right]\\
&\qquad=
\mathbb E\!\left[J
(\mathbb P(B<T\mid T)-\mathbb P(A<T\mid T))
\right]\\
&\qquad=2\mathbb E\!\left[J(e^{-T}-e^{-3T/2})\right].
\end{aligned}
\label{eq:3.4.12}
\end{equation}

Subtract \(C_0\) from \(C_1\), insert \eqref{eq:3.4.11}--\eqref{eq:3.4.12}, and collect
coefficients. The \(c_2\)-terms cancel, the coefficient of \(a\) is
\(-2(2m-1)\), and the coefficient of \(b\) is \(4(2m-1)/3\). This is
exactly \eqref{eq:3.4.2}. 
\end{proof}

\subsection{Exact specialization to the clique-path graphs}

Fix \(M,L\ge1\), and apply Theorem~\ref{thm:3.2} to \(X_{M,L}\). Let:

\begin{itemize}
\tightlist
\item
  \(\Theta_L\) be the artificial time at which an excursion down the
  pendant path first reaches \(p_L\);
\item
  \(Q_M\) be the artificial time at which all clique vertices
  \(q_1,\ldots,q_M\) have been visited.
\end{itemize}

A departure along \(cp_1\) reaches \(p_L\) before returning to \(c\)
with gambler's-ruin probability \(1/L\). Poisson thinning gives

\begin{equation}
\Theta_L\sim\operatorname{Exp}(1/L).
\label{eq:3.5.1}
\end{equation}

The path and clique clocks are disjoint, so \(\Theta_L\perp Q_M\), and
the remote completion time is exactly

\begin{equation}
T_{M,L}=\max(\Theta_L,Q_M).
\label{eq:3.5.2}
\end{equation}

If the path completes last, completion occurs at \(p_L\), whose expected
hitting time to \(c\) is \(L^2\). If the clique completes last,
completion occurs at some \(q_i\), whose expected hitting time to \(c\)
in \(K_{M+1}\) is \(M\). Hence the exact terminal correction is

\begin{equation}
J_{M,L}
=L^2\mathbf1_{\{\Theta_L>Q_M\}}
+M\mathbf1_{\{Q_M>\Theta_L\}}.
\label{eq:3.5.3}
\end{equation}

Define

\begin{equation}
a_{M,L}=\mathbb E e^{-T_{M,L}},
\qquad
b_{M,L}=\mathbb E e^{-3T_{M,L}/2}.
\label{eq:3.5.4}
\end{equation}

Equations \eqref{eq:3.1.6} and \eqref{eq:3.4.2} give the exact finite identity

\begin{equation}
\boxed{
\begin{aligned}
\delta(M,L)={}&2\mathbb ET_{M,L}
-2\bigl(M(M+1)+2L+5\bigr)
\left(a_{M,L}-\frac23b_{M,L}\right)\\
&-2\mathbb E\!\left[
J_{M,L}
\left(e^{-T_{M,L}}-e^{-3T_{M,L}/2}\right)
\right].
\end{aligned}}
\label{eq:3.5.5}
\end{equation}

There is no asymptotic replacement in \eqref{eq:3.5.5}. The two perturbations
used in the theorem are exactly

\begin{equation}
\delta_s(G_r^-;uv)=\delta(M,r),
\qquad
\delta_s(G_r^+;uv)=\delta(M,R),
\label{eq:3.5.6}
\end{equation}

with \(M,R\) from \eqref{eq:3.1.1}.

\subsection{\texorpdfstring{Strict negative sign for the path of length
\(r\)}{Strict negative sign for the path of length r}}

Set \(L=r\) and \(M=36(r+1)^2\). Let \(E_i\) be the first ring of the
direct departure process \(cq_i\). The variables \(E_i\) are independent
\(\operatorname{Exp}(1)\), and

\[
Y_M=\max_{1\le i\le M}E_i
\]

is an upper bound for \(Q_M\), because the first direct departure along
\(cq_i\) certainly visits \(q_i\). Thus

\begin{equation}
T_{M,r}
\le\max(\Theta_r,Y_M)
\le\Theta_r+Y_M.
\label{eq:3.6.1}
\end{equation}

Standard exponential order statistics give

\begin{equation}
\mathbb E\Theta_r=r,
\qquad
\mathbb EY_M=H_M,
\qquad
\mathbb E e^{-Y_M}=\frac1{M+1}.
\label{eq:3.6.2}
\end{equation}

The last identity follows directly from the density of the maximum:

\[
\mathbb E e^{-Y_M}
=M\int_0^\infty e^{-2t}(1-e^{-t})^{M-1}\,dt
=\frac1{M+1}.
\]

Since \(\Theta_r\perp Y_M\), \eqref{eq:3.6.1}--\eqref{eq:3.6.2} imply

\begin{equation}
\mathbb ET_{M,r}\le r+H_M,
\qquad
a_{M,r}\ge\frac1{(r+1)(M+1)}.
\label{eq:3.6.3}
\end{equation}

Also

\begin{equation}
0<b_{M,r}\le a_{M,r},
\qquad
a_{M,r}-\frac23b_{M,r}\ge\frac13a_{M,r},
\label{eq:3.6.4}
\end{equation}

and the terminal term in \eqref{eq:3.5.5} is nonnegative before its minus sign.
Since

\[
M(M+1)+2r+5>0,
\qquad
3(r+1)(M+1)>0,
\]

equations \eqref{eq:3.5.5}, \eqref{eq:3.6.3}, and \eqref{eq:3.6.4} give

\begin{equation}
\frac{\delta(M,r)}2
\le r+H_M
-\frac{M(M+1)+2r+5}{3(r+1)(M+1)}.
\label{eq:3.6.5}
\end{equation}

For \(M\ge144\),

\begin{equation}
H_M\le1+\log M\le\sqrt M.
\label{eq:3.6.6}
\end{equation}

Indeed, \(e^{11}>1+11+11^2/2+11^3/6>144\), so
\(1+\log144<12=\sqrt{144}\); and

\[
\frac{d}{dx}\bigl(\sqrt x-1-\log x\bigr)
=\frac{\sqrt x-2}{2x}>0
\qquad(x>4).
\]

Here \(M=36(r+1)^2\ge144\), so \(\sqrt M=6(r+1)\). Moreover,

\[
M(M+1)+2r+5>M(M+1).
\]

Using these strict inequalities in \eqref{eq:3.6.5},

\begin{equation}
\begin{aligned}
\frac{\delta(M,r)}2
&<r+6(r+1)-\frac{M}{3(r+1)}\\
&=r+6(r+1)-12(r+1)\\
&=-5r-6<0.
\end{aligned}
\label{eq:3.6.7}
\end{equation}

This proves, for every integer \(r\ge1\),

\begin{equation}
\boxed{\delta_s(G_r^-;uv)<0.}
\label{eq:3.6.8}
\end{equation}

At the boundary value \(r=1\), \(M=144\), all denominators in \eqref{eq:3.6.5}
are positive and direct evaluation gives

\begin{equation}
\delta(M,1)\le-\frac{9577}{435}<0.
\label{eq:3.6.9}
\end{equation}

\subsection{\texorpdfstring{Strict positive sign for the path of length
\(R\)}{Strict positive sign for the path of length R}}

Now set \(L=R=M(M+1)+5\). From \(T_{M,R}\ge\Theta_R\) and \eqref{eq:3.5.1},

\begin{equation}
\mathbb ET_{M,R}\ge R,
\qquad
a_{M,R}\le\frac1{R+1}.
\label{eq:3.7.1}
\end{equation}

Equation \eqref{eq:3.5.3} and \(R^2\ge M\) give

\begin{equation}
0\le J_{M,R}\le R^2.
\label{eq:3.7.2}
\end{equation}

Put

\[
f(t)=e^{-t}-e^{-3t/2}\ge0.
\]

Condition on \(Q_M=q\). Since \(\Theta_R\sim\operatorname{Exp}(1/R)\),

\begin{equation}
\mathbb E[f(T_{M,R})\mid Q_M=q]
=f(q)(1-e^{-q/R})
+\int_q^\infty f(t)\frac1R e^{-t/R}\,dt.
\label{eq:3.7.3}
\end{equation}

For \(q\ge0\),

\[
1-e^{-q/R}\le\frac qR
\]

and

\begin{equation}
qf(q)
=qe^{-q}(1-e^{-q/2})
\le\frac{q^2}{2}e^{-q}
\le\frac2{e^2}
<\frac3{10}.
\label{eq:3.7.4}
\end{equation}

The middle inequality uses \(1-e^{-q/2}\le q/2\); \(q^2e^{-q}\) has
maximum \(4e^{-2}\) at \(q=2\). The final inequality follows from

\[
e>1+1+\frac12+\frac16=\frac83,
\]

because \(2/e^2<9/32<3/10\).

The full integral in \eqref{eq:3.7.3} is

\begin{equation}
\begin{aligned}
\mathbb E f(\Theta_R)
&=\frac1{R+1}-\frac2{3R+2}\\
&=\frac{R}{(R+1)(3R+2)}
<\frac1{3R},
\end{aligned}
\label{eq:3.7.5}
\end{equation}

where all denominators are positive. Equations \eqref{eq:3.7.3}--\eqref{eq:3.7.5} give,
uniformly in \(q\ge0\),

\[
\mathbb E[f(T_{M,R})\mid Q_M=q]
<\frac3{10R}+\frac1{3R}
=\frac{19}{30R}.
\]

Therefore

\begin{equation}
\mathbb E f(T_{M,R})<\frac{19}{30R}
\label{eq:3.7.6}
\end{equation}

and, by \eqref{eq:3.7.2},

\begin{equation}
\mathbb E[J_{M,R}f(T_{M,R})]<\frac{19R}{30}.
\label{eq:3.7.7}
\end{equation}

Since \(a_{M,R}-2b_{M,R}/3\le a_{M,R}\) and the coefficient in \eqref{eq:3.5.5}
is positive, \eqref{eq:3.5.5}, \eqref{eq:3.7.1}, and \eqref{eq:3.7.7} yield

\begin{equation}
\frac{\delta(M,R)}2
>
R-\frac{M(M+1)+2R+5}{R+1}-\frac{19R}{30}.
\label{eq:3.7.8}
\end{equation}

By the definition of \(R\),

\begin{equation}
M(M+1)+2R+5=3R.
\label{eq:3.7.9}
\end{equation}

Thus, with \(R+1>0\),

\begin{equation}
\begin{aligned}
\frac{\delta(M,R)}2
&>\frac{11R}{30}-\frac{3R}{R+1}\\
&=\frac{R(11R-79)}{30(R+1)}.
\end{aligned}
\label{eq:3.7.10}
\end{equation}

For \(r\ge1\), \(M\ge144\) and

\[
R\ge144\cdot145+5=20885,
\]

so \(11R-79>0\). Every factor in the denominator and numerator of
\eqref{eq:3.7.10} has the asserted sign. Therefore

\begin{equation}
\boxed{\delta_s(G_r^+;uv)>0.}
\label{eq:3.7.11}
\end{equation}

The sign orientation is unambiguous:

\begin{itemize}
\tightlist
\item
  path length \(r\): negative response;
\item
  path length \(R=M(M+1)+5\): positive response.
\end{itemize}

At \(r=1\), direct evaluation gives the positive lower bound

\begin{equation}
\delta(M,R)>\frac{159878852}{10443}>0.
\label{eq:3.7.12}
\end{equation}

\subsection{Proof of the main theorem}

\begin{proof}[Proof of Theorem~\ref{thm:1.1}] Fix \(r\ge1\) and choose \(M,R\) as in
\eqref{eq:3.1.1}. Sections 3.1--3.2 construct the two finite connected simple
configurations and prove that their marked ambient-degree-labelled
radius-\(r\) data are isomorphic. Section 3.3 proves the three
additional equalities in \eqref{eq:1.3}. Equations \eqref{eq:3.6.8} and \eqref{eq:3.7.11} give the
strict responses in \eqref{eq:1.2}, with the orientation stated there.

\end{proof}

At the boundary radius \(r=1\), the supplementary arithmetic certificate
records exact rational values of \(a_{144,L}\), \(b_{144,L}\), every
intermediate bound and its slack. In particular it distinguishes the two
normalizations explicitly:

\[
\frac{\delta(144,1)}2\le-\frac{9577}{870},\qquad
\delta(144,1)\le-\frac{9577}{435},
\]

and

\[
\frac{\delta(144,20885)}2>\frac{79939426}{10443},\qquad
\delta(144,20885)>\frac{159878852}{10443}.
\]

\begin{remark}[order information and padding]\label{rem:3.5} Theorem~\ref{thm:1.1} does
not refute a decision rule that receives the local datum together with
the graph order. Attaching vertices beyond the visible radius is not
response-neutral: such padding changes the remote completion time and
the terminal-excursion correction in Theorem~\ref{thm:3.2}, precisely the
quantities that create the sign reversal. The smaller configuration
therefore cannot simply be padded to the larger order without a new sign
analysis. The equal-order strengthening is a separate open problem.
\end{remark}

\section{Conductance interpolation and contraction}

\subsection{Target-wise interpolation}

For a nonempty target \(S\subseteq V\setminus\{s\}\), put
\(B=V\setminus S\) and \[
K_S=L[B,B]^{-1}.
\] All vectors below are restricted to \(B\). Define
\begin{equation}
\begin{aligned}
\rho_S&=b^{\mathsf T}K_Sb, &
\eta_S&=e_s^{\mathsf T}K_Sa,\\
\alpha_S&=e_s^{\mathsf T}K_Sb, &
\beta_S&=b^{\mathsf T}K_Sd,\\
\gamma_S&=b^{\mathsf T}K_Sa,
\end{aligned}
\label{eq:4.1}
\end{equation}
and
\begin{equation}
A_S=\eta_S-\alpha_S\beta_S,\qquad
B_S=\rho_S\eta_S-\alpha_S\gamma_S.
\label{eq:4.2}
\end{equation}

\begin{theorem}[exact quadratic-over-linear target response]\label{thm:4.1}

Let \(H_s^\lambda(S)\) be the \(G_\lambda\) hitting time of \(S\). Then

\begin{equation}
\boxed{
H_s^\lambda(S)-H_s^0(S)
=\frac{\lambda(A_S+\lambda B_S)}{1+\lambda\rho_S}.
}
\label{eq:4.3}
\end{equation}
If \(b_B=0\), the response is zero.

\end{theorem}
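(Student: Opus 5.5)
The plan is to repeat the argument of Lemma~\ref{lem:2.2} with the inserted edge given conductance \(\lambda\) instead of \(1\), keeping \(\lambda\) as a parameter throughout. In \(G_\lambda=G+\lambda uv\), the walk from \(x\) moves to \(y\) with probability equal to the conductance of \(xy\) divided by the weighted degree \(d_x^\lambda\). Here \(d^\lambda=d+\lambda a\), with \(a\) the indicator vector of \(\{u,v\}\). The weighted Laplacian is \(L+\lambda bb^{\mathsf T}\), with \(b=e_u-e_v\).

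\textbf{Step 1: first-step equations.} For \(x\in B\), the first-step equations read \(h_x=1+\sum_{y\in B}P^\lambda_{xy}h_y\). Multiplying by \(d^\lambda_x\) gives \((L_B+\lambda b_Bb_B^{\mathsf T})h_B^\lambda=d_B+\lambda a_B\). The matrix \(L_B+\lambda b_Bb_B^{\mathsf T}\) is positive definite, since \(L_B\succ0\) by Lemma~\ref{lem:2.2} and \(\lambda\ge0\). Hence \(H^\lambda_s(S)=e_s^{\mathsf T}(L_B+\lambda b_Bb_B^{\mathsf T})^{-1}(d_B+\lambda a_B)\).

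\textbf{Step 2: rank-one inverse.} Sherman--Morrison, as in \eqref{eq:2.4} with \(b\) replaced by \(\sqrt\lambda\,b\), gives
\[
(L_B+\lambda b_Bb_B^{\mathsf T})^{-1}=K_S-\frac{\lambda K_Sb_Bb_B^{\mathsf T}K_S}{1+\lambda\rho_S}.
\]
The denominator is at least \(1\), because \(\rho_S=b_B^{\mathsf T}K_Sb_B\ge0\) when \(K_S\succ0\).

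\textbf{Step 3: algebra.} Expand \(e_s^{\mathsf T}(\cdot)(d_B+\lambda a_B)\) and subtract \(H^0_s(S)=e_s^{\mathsf T}K_Sd_B\), using the notation \eqref{eq:4.1}:
\[
H^\lambda_s(S)-H^0_s(S)=\lambda\eta_S-\frac{\lambda\alpha_S(\beta_S+\lambda\gamma_S)}{1+\lambda\rho_S}.
\]
Put both terms over the common denominator \(1+\lambda\rho_S\). The numerator becomes \(\lambda\bigl[(\eta_S-\alpha_S\beta_S)+\lambda(\rho_S\eta_S-\alpha_S\gamma_S)\bigr]\), which is \(\lambda(A_S+\lambda B_S)\) by \eqref{eq:4.2}. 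This proves \eqref{eq:4.3}.

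\textbf{Step 4: the degenerate case.} Suppose \(b_B=0\). The vector \(b_B\) has a nonzero entry at each endpoint lying in \(B\), so both \(u\) and \(v\) lie in \(S\). Then \(a_B=0\) as well, so \(\eta_S=\alpha_S=\gamma_S=\rho_S=0\), and the response vanishes, matching \eqref{eq:2.7}.

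The only point needing care is Step 1: checking that the weighted first-step equations really produce the degree vector \(d+\lambda a\) on the right-hand side, and not \(d\) alone. This is the same bookkeeping as in Lemma~\ref{lem:2.2}. The remaining steps are routine algebra.
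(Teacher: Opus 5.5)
Your proposal is correct and follows essentially the same route as the paper: write the grounded hitting vector as \((L_B+\lambda b_Bb_B^{\mathsf T})^{-1}(d_B+\lambda a_B)\), invert by Sherman--Morrison, take the \(s\)-coordinate, subtract \(e_s^{\mathsf T}K_Sd_B\), and combine over the common denominator \(1+\lambda\rho_S\). Your derivation of the weighted first-step equations and your treatment of the degenerate case \(b_B=0\) fill in details that the paper leaves implicit.
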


\begin{proof}

The grounded hitting vector is \[
h^\lambda
=(L_B+\lambda bb^{\mathsf T})^{-1}(d+\lambda a).
\] For \(b_B\ne0\), grounded positivity gives \(\rho_S>0\).
Sherman--Morrison gives \[
(L_B+\lambda bb^{\mathsf T})^{-1}
=K_S-\frac{\lambda K_Sbb^{\mathsf T}K_S}{1+\lambda\rho_S}.
\] Taking the \(s\)-coordinate and subtracting \(e_s^{\mathsf T}K_Sd\)
yields \[
\lambda\eta_S
-\frac{\lambda\alpha_S\beta_S+\lambda^2\alpha_S\gamma_S}
{1+\lambda\rho_S}.
\] Putting this over the common denominator gives \eqref{eq:4.3}. 
\end{proof}

\begin{lemma}[endpoint classes and the leading coefficient]\label{lem:4.2}

The target coefficients satisfy:

\begin{enumerate}
\def\labelenumi{\arabic{enumi}.}
\tightlist
\item
  If \(u,v\in S\), the response is identically zero.
\item
  If exactly one endpoint is outside \(S\), call it \(w\), then
\begin{equation}
  B_S=0,\qquad
  A_S=K_S(s,w)\bigl(1-H_w^0(S)\bigr)\le0.
\label{eq:4.4}
  \end{equation}

\item
  If \(u,v\notin S\), then
\begin{equation}
  \boxed{
  B_S=
  2\!\left[
  K_S(s,u)\bigl(K_S(v,v)-K_S(u,v)\bigr)
  +K_S(s,v)\bigl(K_S(u,u)-K_S(u,v)\bigr)
  \right]\ge0.
  }
\label{eq:4.5}
  \end{equation}
It is strictly positive precisely when \(s\) lies in the same
  component of \(G[B]\) as at least one of \(u,v\).
\end{enumerate}

\end{lemma}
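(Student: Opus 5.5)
The plan is to substitute the explicit vectors \(a_B,b_B\) for each endpoint class into the definitions \eqref{eq:4.1}--\eqref{eq:4.2}, using throughout that \(K_S=L_B^{-1}\) is symmetric because \(L_B\) is. For the sign and strictness claims I will then use the occupation interpretation \eqref{eq:2.2}.

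\textbf{Items 1 and 2.} If \(u,v\in S\), then \(a_B=b_B=0\), and Theorem~\ref{thm:4.1} already records that the response vanishes. If exactly one endpoint \(w\) lies in \(B\), then \(a_B=e_w\) and \(b_B=\pm e_w\). This gives
\[
\rho_S=K_{ww},\quad \eta_S=K_{sw},\quad \alpha_S=\pm K_{sw},\quad \beta_S=\pm h_w^S,\quad \gamma_S=\pm K_{ww}.
\]
The signs cancel in both products. Hence \(B_S=K_{ww}K_{sw}-K_{sw}K_{ww}=0\) and \(A_S=K_{sw}(1-h_w^S)\). Nonpositivity follows exactly as in Proposition~\ref{prop:2.4}: \(K_{sw}\ge0\), and \(h_w^S\ge1\) because \(w\notin S\).

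\textbf{Item 3, identity and sign.} Now \(a_B=e_u+e_v\) and \(b_B=e_u-e_v\). Then
\[
\rho_S=K_{uu}+K_{vv}-2K_{uv},\quad \eta_S=K_{su}+K_{sv},\quad \alpha_S=K_{su}-K_{sv},\quad \gamma_S=K_{uu}-K_{vv}.
\]
Expanding \(\rho_S\eta_S-\alpha_S\gamma_S\) and collecting the coefficient of \(K_{su}\) gives \(2(K_{vv}-K_{uv})\); collecting that of \(K_{sv}\) gives \(2(K_{uu}-K_{uv})\). This is \eqref{eq:4.5}.

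For the sign, \eqref{eq:2.2} together with the strong Markov property at \(T_v\) gives
\[
K_{uv}=\mathbb P_u(T_v<T_S)\,K_{vv}.
\]
Hence \(K_{vv}-K_{uv}=K_{vv}\bigl(1-\mathbb P_u(T_v<T_S)\bigr)\ge0\), and symmetrically for \(K_{uu}-K_{uv}\). Combined with \(K_{su},K_{sv}\ge0\), this gives \(B_S\ge0\).

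\textbf{Item 3, strictness (the main obstacle).} By \eqref{eq:2.2}, \(K_{sx}>0\) exactly when the walk from \(s\) can reach \(x\) before \(S\), i.e.\ when \(s\) and \(x\) lie in the same component of \(G[B]\). Also \(K_{xx}\ge1/d_x>0\).

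If \(s\) is in the component of neither endpoint, both \(K_{su}\) and \(K_{sv}\) vanish, so \(B_S=0\).

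Conversely, suppose \(s\) shares a component with \(u\), say. If \(\mathbb P_u(T_v<T_S)<1\), the \(K_{su}\) term is strictly positive and we are done. Otherwise \(v\) separates \(u\) from \(S\) inside \(B\). In particular \(v\) lies in the same component, so \(K_{sv}>0\), and it suffices to show \(\mathbb P_v(T_u<T_S)<1\).

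Suppose instead that both hitting probabilities were \(1\). By connectedness of \(G\), there is a path from \(u\) into \(S\) whose vertices before the first \(S\)-vertex lie in \(B\). Let \(x\in\{u,v\}\) be the last vertex of \(\{u,v\}\) on this path before it enters \(S\). The terminal segment from \(x\) reaches \(S\) while avoiding the other endpoint, and it has positive probability, contradicting the hitting probability from \(x\) being \(1\). Hence at least one bracket term is strictly positive, which establishes the "precisely when" characterisation. The degenerate cases \(s=u\) or \(s=v\) are covered by the same argument, since then \(K_{su}=K_{uu}>0\) or \(K_{sv}=K_{vv}>0\).
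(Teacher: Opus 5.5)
Your proof is correct. Items 1 and 2 and the algebraic identity in item 3 are the same substitutions the paper makes. The genuine difference is in how you obtain the sign and the strictness in item 3.

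\textbf{The paper's route.} It introduces the potential $\psi=K_S(e_u-e_v)$ and applies the discrete maximum principle to get $K_{uu}-K_{uv}\ge0$ and $K_{vv}-K_{uv}\ge0$. It then notes that these two differences sum to $\rho_S=b^{\mathsf T}K_Sb>0$ by positive definiteness, so at least one bracket is strict. The case of endpoints in different components of $G[B]$ is handled separately via $K_{uv}=0$.

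\textbf{Your route.} You use the strong-Markov factorization $K_{uv}=\mathbb P_u(T_v<T_S)K_{vv}$ and its symmetric counterpart. This gives each bracket an explicit meaning, $K_{vv}-K_{uv}=K_{vv}\,\mathbb P_u(T_S<T_v)$. It also absorbs the different-component case automatically, since the hitting probability is then $0$. For strictness you use a last-exit path argument based on connectedness of $G$, which is valid.

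\textbf{A shortcut.} Once you are in the case $K_{vv}=K_{uv}$, the identity $\rho_S=(K_{uu}-K_{uv})+(K_{vv}-K_{uv})>0$ forces $K_{uu}-K_{uv}>0$ immediately, exactly as in the paper. So the path argument is not needed.

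\textbf{What each approach buys.} Your argument is more probabilistic and self-contained, and it explains each bracket as an escape probability. The paper's argument is more compact and algebraic, and it obtains strictness from positive definiteness of $K_S$ alone.
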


\begin{proof}

The first claim has \(a_B=b_B=0\). In the one-endpoint case,
\(a_B=e_w\), \(b_B=\pm e_w\). Substitution into \eqref{eq:4.1}--\eqref{eq:4.2} gives
\eqref{eq:4.4}; the Green entry is nonnegative and \(H_w^0(S)\ge1\).

For the zero-endpoint case, abbreviate \[
p=K_{su},\quad q=K_{sv},\quad
U=K_{uu},\quad V=K_{vv},\quad W=K_{uv}.
\] Then \[
\rho=U+V-2W,\quad
\eta=p+q,\quad
\alpha=p-q,\quad
\gamma=U-V.
\] Expanding \(B=\rho\eta-\alpha\gamma\) gives \eqref{eq:4.5}.

For strictness, let \(\psi=K_S(e_u-e_v)\). On a component containing
both endpoints, \(\psi\) is harmonic away from \(u,v\), has zero
boundary on \(S\), a positive source at \(u\), and a negative source at
\(v\). The discrete maximum principle gives \[
\psi_u=U-W\ge0,\qquad -\psi_v=V-W\ge0.
\] Their sum is \(\rho_S>0\), so at least one is strict. If the
endpoints lie in different components, then \(W=0\) and grounded
positivity gives \(U,V>0\). Since \(p,q\ge0\), \eqref{eq:4.5} is nonnegative. If
\(s\) shares the common endpoint component, then both \(p,q>0\) and at
least one bracketed difference is strict. If the endpoint components are
different, the nonzero one of \(p,q\) multiplies the positive diagonal
difference of the other endpoint component. Conversely, if \(s\) lies in
neither endpoint component, then \(p=q=0\). This proves the equality
case. 
\end{proof}

The important limitation is that \(A_S\) has no fixed sign when neither
endpoint is in \(S\).

\subsection{The simple-pole representation}

Inclusion--exclusion holds for every \(\lambda\ge0\):
\begin{equation}
f_s(\lambda)
=\sum_{\varnothing\ne S\subseteq V\setminus\{s\}}
(-1)^{|S|+1}H_s^\lambda(S).
\label{eq:4.6}
\end{equation}
Let \(\varepsilon_S=(-1)^{|S|+1}\). For \(\rho_S>0\),
\begin{equation}
\frac{A_S+\lambda B_S}{1+\lambda\rho_S}
=\frac{B_S}{\rho_S}
+\frac{A_S\rho_S-B_S}{\rho_S(1+\lambda\rho_S)}.
\label{eq:4.7}
\end{equation}

\begin{theorem}[global simple-pole representation]\label{thm:4.3}

Let \(\mathcal R_s\) be the set of distinct positive values of
\(\rho_S\) with nonzero grouped coefficient. Define
\begin{equation}
\ell_s=\sum_{\rho_S>0}\varepsilon_S\frac{B_S}{\rho_S},
\label{eq:4.8}
\end{equation}
and
\begin{equation}
c_{s,\rho}
=
\sum_{\substack{S:\rho_S=\rho}}
\varepsilon_S\frac{A_S\rho_S-B_S}{\rho_S}.
\label{eq:4.9}
\end{equation}
Then
\begin{equation}
\boxed{
\frac{f_s(\lambda)-f_s(0)}{\lambda}
=\ell_s+\sum_{\rho\in\mathcal R_s}
\frac{c_{s,\rho}}{1+\rho\lambda}.
}
\label{eq:4.10}
\end{equation}
Every actual pole is simple and lies on the negative real axis.

\end{theorem}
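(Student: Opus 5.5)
The plan is to sum the target-wise identity of Theorem~\ref{thm:4.1} against the inclusion--exclusion weights in \eqref{eq:4.6}, then split each summand by the partial-fraction identity \eqref{eq:4.7}.

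First, subtract the \(\lambda=0\) instance of \eqref{eq:4.6} from the general one. This gives
\[
f_s(\lambda)-f_s(0)=\sum_{S}\varepsilon_S\bigl(H_s^\lambda(S)-H_s^0(S)\bigr).
\]
The sum is finite, so there is no convergence issue.

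Second, handle the targets with \(b_B=0\), i.e.\ \(u,v\in S\). Theorem~\ref{thm:4.1} and Lemma~\ref{lem:4.2}(1) say their response vanishes identically, so they drop out. For every other target, \(b_B\neq0\), and grounded positivity (\(L_B\succ0\), hence \(K_S\succ0\)) gives \(\rho_S=b^{\mathsf T}K_Sb>0\). So the surviving sum runs exactly over \(\rho_S>0\), matching the index set in \eqref{eq:4.8}.

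Third, divide by \(\lambda>0\). By \eqref{eq:4.3} each surviving term becomes \(\varepsilon_S(A_S+\lambda B_S)/(1+\lambda\rho_S)\). Apply \eqref{eq:4.7}, which is checked by clearing denominators: \(B_S(1+\lambda\rho_S)+A_S\rho_S-B_S=\rho_S(A_S+\lambda B_S)\). The constant parts \(\varepsilon_SB_S/\rho_S\) sum to \(\ell_s\). The remaining parts are grouped by the value of \(\rho_S\); for each distinct \(\rho\) this produces \(c_{s,\rho}/(1+\rho\lambda)\) with \(c_{s,\rho}\) as in \eqref{eq:4.9}. Values of \(\rho\) whose grouped coefficient vanishes contribute nothing, so restricting to \(\mathcal R_s\) loses no terms. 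This proves \eqref{eq:4.10} for \(\lambda>0\). Both sides are rational in \(\lambda\), so the identity extends by continuation, and at \(\lambda=0\) it reads as the derivative in the limit.

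Finally, the pole statement. Each term \(c_{s,\rho}/(1+\rho\lambda)\) with \(c_{s,\rho}\neq0\) has exactly one pole, at \(\lambda=-1/\rho<0\), and it is simple. Distinct \(\rho\in\mathcal R_s\) give distinct pole locations, so no cancellation or coalescence can occur between them. Hence every actual pole is simple, lies on the negative real axis, and corresponds bijectively to an element of \(\mathcal R_s\).

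The only delicate point is the bookkeeping: the grouping must collect all targets sharing the same \(\rho\) value before discarding any, so that "actual pole" means a pole of the grouped sum rather than of an individual summand. Everything else is algebra on results already established.
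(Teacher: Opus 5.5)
Your proof is correct and follows essentially the same route as the paper: subtract the $\lambda=0$ instance of \eqref{eq:4.6}, insert Theorem~\ref{thm:4.1}, divide by $\lambda$, split each term with \eqref{eq:4.7}, and group targets with equal $\rho$-values. Your direct argument that distinct $\rho\in\mathcal R_s$ give distinct simple poles at $-1/\rho$ plays the role of the paper's appeal to uniqueness of partial fractions, and your check that $\rho_S>0$ exactly when $b_B\neq0$ makes explicit a detail the paper leaves implicit.
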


\begin{proof}

Subtract \eqref{eq:4.6} at \(\lambda=0\), insert Theorem~\ref{thm:4.1}, divide by
\(\lambda\), and apply \eqref{eq:4.7} term by term. The target family is finite,
so grouping equal \(\rho\)-values is exact. Uniqueness of partial
fractions shows that every nonzero group gives one simple pole and no
higher-order pole remains. 
\end{proof}

At unit conductance, \eqref{eq:4.10} gives the exact equality criterion

\begin{equation}
\delta_s=0
\quad\Longleftrightarrow\quad
\ell_s+\sum_{\rho\in\mathcal R_s}\frac{c_{s,\rho}}{1+\rho}=0.
\label{eq:4.11}
\end{equation}

The coefficients may have mixed signs. We use the representation here to
identify the leading response at large conductance, not to claim
universal noncancellation at \(\lambda=1\).

\subsection{The contraction--occupation theorem}

Let \(\bar G=G/uv\) be the multigraph obtained by identifying \(u,v\) to
one vertex \(w\), retaining parallel edges. Its degree is
\begin{equation}
\bar d_w=d_u+d_v.
\label{eq:4.12}
\end{equation}
Let \((Y_t)\) be simple random walk on this multigraph, so a parallel
edge is chosen with its multiplicity. Let \(\bar s\) be the image of
\(s\), let \(\bar\tau_{\rm cov}\) be its cover time, and define the
pre-cover occupation
\begin{equation}
L_w=\sum_{t=0}^{\bar\tau_{\rm cov}-1}{\bf1}_{\{Y_t=w\}}.
\label{eq:4.13}
\end{equation}

\begin{theorem}[exact high-conductance coefficient and zero classification]\label{thm:4.4}

For every connected finite simple \(G\), genuine nonedge \(uv\), and
start \(s\),
\begin{equation}
\boxed{
\ell_s:=\lim_{\lambda\to\infty}
\frac{f_s(\lambda)-f_s(0)}{\lambda}
=\frac{2}{d_u+d_v}\,
\mathbb E_{\bar s}^{\bar G}L_w\ge0.
}
\label{eq:4.14}
\end{equation}

Moreover:

\begin{enumerate}
\def\labelenumi{\arabic{enumi}.}
\tightlist
\item
  if \(s\in\{u,v\}\), then \(\ell_s>0\);
\item
  if \(s\notin\{u,v\}\), then \(\ell_s=0\) if and only if the underlying
  simple graph of \(\bar G\) is a path whose endpoints are \(s\) and
  \(w\);
\item
  in terms of the original simple graph, the zero case is exactly
\begin{equation}
  s=x_0-x_1-\cdots-x_L=c,
  \qquad cu,cv\in E,
\label{eq:4.15}
  \end{equation}
with no other vertices or edges. Here \(L\ge0\) is the number of
  path edges; \(u,v\) are the two pendant leaves at \(c\).
\end{enumerate}

\end{theorem}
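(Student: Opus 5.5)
The plan is to identify \(\ell_s\) target by target, using the target-wise formula rather than a direct coupling. By Theorem~\ref{thm:4.3} and \eqref{eq:4.7}, \(\ell_s=\sum_{\rho_S>0}\varepsilon_S B_S/\rho_S\), and Lemma~\ref{lem:4.2} shows that only targets with \(u,v\notin S\) can contribute. For such \(S\), I will compute \(\lim_{\lambda\to\infty}\lambda^{-1}\bigl(H^\lambda_s(S)-H^0_s(S)\bigr)=B_S/\rho_S\) a second way, through the limit of the grounded inverse. Write \(h^\lambda=(L_B+\lambda bb^{\mathsf T})^{-1}(d+\lambda a)\) with \(b=e_u-e_v\) and \(a=e_u+e_v\). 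As \(\lambda\to\infty\), the Sherman--Morrison form \(K_S-\lambda K_Sbb^{\mathsf T}K_S/(1+\lambda\rho_S)\) converges to
\[
K_\infty:=K_S-\frac{K_Sbb^{\mathsf T}K_S}{\rho_S}.
\]
This matrix is the inverse of \(L_B\) on the constrained subspace \(\{x_u=x_v\}\), i.e.\ the lift of the grounded Green matrix \(\bar K_{\bar S}\) of the contracted multigraph \(\bar G\). The lift identity \(K_\infty=\Pi\bar K_{\bar S}\Pi^{\mathsf T}\), with \(\Pi\) the map that duplicates the \(w\)-coordinate, is what I will verify first. It follows from the Schur-complement characterisation of energy minimisers under the constraint \(b^{\mathsf T}x=0\), together with the fact that \(\Pi^{\mathsf T}L_B\Pi\) is the grounded Laplacian of \(\bar G\) with parallel edges retained.

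With the lift identity in hand, \(\lambda^{-1}h^\lambda_s\to e_s^{\mathsf T}K_\infty a=2\bar K_{\bar S}(\bar s,w)\). By \eqref{eq:2.2} applied on \(\bar G\), with \(\bar d_w=d_u+d_v\) from \eqref{eq:4.12}, this equals \(\tfrac{2}{d_u+d_v}\mathbb E_{\bar s}\sum_{t<T_{\bar S}}\mathbf 1_{\{Y_t=w\}}\). Summing with signs \(\varepsilon_S\) over the finite family is harmless. The map \(S\mapsto\bar S\) is a cardinality-preserving bijection between targets avoiding \(u,v\) and subsets of \(V(\bar G)\setminus\{\bar s\}\) avoiding \(w\). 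Targets in \(\bar G\) containing \(w\) contribute zero occupation of \(w\), except possibly when \(\bar s=w\); but then \(w\) is excluded from the targets anyway. So the signed sum is exactly the inclusion--exclusion of Lemma~\ref{lem:2.1}, applied to \(\mathbb P(Y_t=w,\ \bar\tau_{\rm cov}>t)\) and summed over \(t\); the tail is geometric as before. This yields \eqref{eq:4.14}, and nonnegativity is immediate.

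For the zero classification, if \(s\in\{u,v\}\) then \(\bar s=w\) and \(L_w\ge1\) from \(t=0\) (note \(|V(\bar G)|\ge2\), so \(\bar\tau_{\rm cov}\ge1\)). If \(s\notin\{u,v\}\), then \(\mathbb E L_w=0\) iff almost surely \(w\) is the last vertex discovered. By positivity of finite trajectories, this holds iff every vertex \(x\ne\bar s,w\) separates \(\bar s\) from \(w\) in the underlying simple graph. I will show that this separation property forces a path with endpoints \(\bar s,w\). If the graph is not such a path, then either some vertex lies off a shortest \(\bar s\)--\(w\) path, or a chord exists; in either case one obtains a vertex \(x\) and an \(\bar s\)--\(w\) walk avoiding \(x\). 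Conversely, on such a path the claim is clear. To lift back to \(G\): \(w\) is a leaf of the simple path, so all edges at \(u\) and \(v\) go to the single neighbour \(c\). Since each of \(u,v\) has degree at least \(1\) and \(G\) is simple, \(cu,cv\in E\) and \(d_u=d_v=1\). This gives \eqref{eq:4.15}, with \(L=0\) meaning \(s=c\).

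The main obstacle I expect is the lift identity for \(K_\infty\), in particular handling cleanly the case where \(u\) and \(v\) lie in different components of \(G[B]\). There contraction merges components, so I will check it directly via the variational (constrained Dirichlet) characterisation rather than by component-wise bookkeeping. The separation-to-path combinatorics is the secondary point requiring care.
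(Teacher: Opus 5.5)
Your proposal is correct and takes essentially the same approach as the paper. Your identification of the limiting Sherman--Morrison inverse $K_\infty$ with the lift $\Pi\bar K_{\bar S}\Pi^{\mathsf T}$, via constrained energy minimisation, is the matrix-level form of the paper's test-vector argument that $r=K_\infty a$ satisfies $b^{\mathsf T}r=0$ and $\bar L_S\bar r=2e_w$; your occupation conversion and your separation/shortest-path classification coincide with the paper's Steps~2 and~3.
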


\begin{proof}

\textbf{Step 1: identify the leading coefficient of one target.} Let
\(S\) be disjoint from \(\{u,v\}\). With the notation of Section 4.1,
set
\begin{equation}
r=K_Sa-K_Sb\frac{b^{\mathsf T}K_Sa}{\rho_S}.
\label{eq:4.16}
\end{equation}
The coefficient of \(\lambda\) in \eqref{eq:4.3} is
\begin{equation}
\frac{B_S}{\rho_S}=e_s^{\mathsf T}r.
\label{eq:4.17}
\end{equation}
Also \(b^{\mathsf T}r=0\), so \(r_u=r_v\) and \(r\) descends to a
vector \(\bar r\) on the contracted grounded graph.

For every test vector \(z\) with \(z_u=z_v\),
\begin{equation}
z^{\mathsf T}L_Sr
=z^{\mathsf T}\left(a-b\frac{b^{\mathsf T}K_Sa}{\rho_S}\right)
=z^{\mathsf T}a=2z_w.
\label{eq:4.18}
\end{equation}
The Laplacian quadratic form is preserved by contraction on vectors
constant at \(u,v\). Hence, if \(\bar L_S\) is the grounded Laplacian of
\(\bar G\),
\begin{equation}
\bar L_S\bar r=2e_w,
\qquad
\bar r=2\bar L_S^{-1}e_w.
\label{eq:4.19}
\end{equation}
Therefore
\begin{equation}
\boxed{
\frac{B_S}{\rho_S}=2(\bar L_S^{-1})_{\bar s,w}.
}
\label{eq:4.20}
\end{equation}
If \(S\) contains at least one of \(u,v\), the coefficient of
\(\lambda\) is zero by the endpoint classification in Lemma~\ref{lem:4.2}.

\textbf{Step 2: convert the alternating sum to occupation.} For a target
\(S\) not containing \(w\), the killed Green identity gives
\begin{equation}
\mathbb E_{\bar s}\sum_{t=0}^{T_S-1}{\bf1}_{\{Y_t=w\}}
=\bar d_w(\bar L_S^{-1})_{\bar s,w}.
\label{eq:4.21}
\end{equation}
Indeed, on the ungrounded vertices \(I-P=\bar D^{-1}\bar L\), so
\((I-P)^{-1}=\bar L^{-1}\bar D\).

Pathwise inclusion--exclusion gives
\begin{equation}
{\bf1}_{\{t<\bar\tau_{\rm cov}\}}
=\sum_{\varnothing\ne S\subseteq
\bar V\setminus\{\bar s\}}
(-1)^{|S|+1}{\bf1}_{\{t<T_S\}}.
\label{eq:4.22}
\end{equation}
Multiplying by \({\bf1}_{\{Y_t=w\}}\), summing in \(t\), and taking
expectations is legitimate because the graph is finite and every cover
time has finite expectation. Terms with \(w\in S\) are zero: before
hitting such a target the walk cannot occupy \(w\). Equations
\eqref{eq:4.20}--\eqref{eq:4.22} and \eqref{eq:4.6} now give \[
\ell_s
=2\sum_{\substack{\varnothing\ne S\subseteq
\bar V\setminus\{\bar s,w\}}}
(-1)^{|S|+1}(\bar L_S^{-1})_{\bar s,w}
=\frac{2}{\bar d_w}\mathbb E_{\bar s}L_w.
\] This proves \eqref{eq:4.14}, including integrability and nonnegativity.

\textbf{Step 3: classify equality.} The contracted graph has at least
two vertices. If \(\bar s=w\), then \(t=0<\bar\tau_{\rm cov}\), so
\(L_w\ge1\) and \(\ell_s>0\).

Now suppose \(\bar s\ne w\). Since \(L_w\) is a nonnegative integer,
\(\mathbb E L_w=0\) exactly when the walk cannot reach \(w\) before the
contracted graph is covered. Such an early arrival has positive
probability exactly when there is a simple \(\bar s\)-to-\(w\) path
omitting some vertex: following the path has positive probability, and
loop-erasing any early-arrival walk produces such a simple path.

Thus equality holds exactly when every simple \(\bar s\)-to-\(w\) path
is spanning. Choose one such path \(P\). It is Hamiltonian. Any
additional edge between nonconsecutive vertices of \(P\) would produce
an \(\bar s\)-to-\(w\) path skipping the intervening vertices. Hence the
underlying simple graph is exactly \(P\). Conversely, on a path with
endpoints \(\bar s,w\), reaching \(w\) forces every vertex to have been
visited.

Finally pull the path back through the contraction. The predecessor
\(c\) of \(w\) is the only possible old neighbour of either \(u\) or
\(v\). Connectedness forces both edges \(cu,cv\), and simplicity permits
no others. The remainder is the path from \(s\) to \(c\). This is
exactly \eqref{eq:4.15}. 
\end{proof}

\begin{theorem}[exact response in the zero-leading family]\label{thm:4.5}

For the graph \eqref{eq:4.15}, with path length measured in edges and the walk
started at \(s=x_0\),
\begin{equation}
\boxed{
f_s(\lambda)
=L^2+L+2+\frac{2L+3}{1+2\lambda},
}
\label{eq:4.23}
\end{equation}
and hence
\begin{equation}
\boxed{
f_s(\lambda)-f_s(0)
=-\frac{2\lambda(2L+3)}{1+2\lambda}<0
\quad(\lambda>0).
}
\label{eq:4.24}
\end{equation}

\end{theorem}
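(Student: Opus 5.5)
The plan is a direct first-step computation, not the general machinery of Theorems~\ref{thm:4.3}--\ref{thm:4.4}. The graph \eqref{eq:4.15} is so rigid that cover time decomposes into two sequential phases. In \(G_\lambda\) the path \(x_0-\cdots-x_L\) is covered exactly when the walk first reaches \(c=x_L\). Starting from \(s=x_0\), where the path is reflected at \(x_0\), this hitting time is \(L^2\). The inserted conductance-\(\lambda\) edge \(uv\) cannot be used before \(c\) is reached, since \(u,v\) are pendant at \(c\) in the old graph. By the strong Markov property at the first arrival at \(c\),
\[
f_s(\lambda)=L^2+\mathbb E_c\bigl[\text{time to visit both }u,v\bigr],
\]
where the second walk runs on the weighted graph consisting of the path, the edges \(cu,cv\) of conductance \(1\), and \(uv\) of conductance \(\lambda\). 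It therefore suffices to show that this residual quantity equals \(L+2+(2L+3)/(1+2\lambda)\). The claim \eqref{eq:4.24} is then immediate by subtracting the value at \(\lambda=0\), since \(f_s(0)=L^2+L+2+(2L+3)\).

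For \(L\ge1\), vertex \(c\) has three unit-conductance neighbours \(x_{L-1},u,v\). A departure into the path returns to \(c\) after an expected \(1+(2L-1)=2L\) steps, because \(\mathbb E_{x_{L-1}}T_c=L^2-(L-1)^2\) on the reflected path. I will introduce the following unknowns:
\begin{itemize}
\item \(h_0\), the expected remaining time from \(c\) with neither endpoint seen;
\item \(h_1\), the expected time from \(c\) to hit \(v\) given that \(u\) has been seen (by symmetry the same as for \(u\) with \(v\) seen);
\item \(k\), the expected time from \(u\) to hit \(v\).
\end{itemize}
The first-step equations are
\[
h_0=\tfrac13(2L+h_0)+\tfrac23(1+k),\qquad
h_1=\tfrac13(2L+h_1)+\tfrac13\cdot1+\tfrac13(1+k),\qquad
k=\tfrac{\lambda}{1+\lambda}\cdot1+\tfrac1{1+\lambda}(1+h_1).
\]
The walk from \(u\) moves to \(v\) with probability \(\lambda/(1+\lambda)\) and to \(c\) otherwise. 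I will first solve the pair \((h_1,k)\), whose solution is linear-fractional in \(\lambda\) with denominator \(1+2\lambda\). Then I will substitute into the equation for \(h_0\) and simplify, expecting exactly \(h_0=L+2+(2L+3)/(1+2\lambda)\).

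As a sanity check at \(\lambda=0\), this gives \(h_1=3L+1\), \(k=3L+2\), and \(h_0=3L+5\), in agreement with \(L+2+(2L+3)\). As \(\lambda\to\infty\), the formula gives \(h_0\to L+2\), consistent with Theorem~\ref{thm:4.4}, which says the leading coefficient vanishes for this family.

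The boundary case \(L=0\), where \(s=c\) has degree \(2\), needs a separate system with no path departures. The same system with the path terms deleted and probabilities \(1/2\) should reproduce the formula at \(L=0\), namely \(2+3/(1+2\lambda)\). I will verify this directly, noting that the initial hitting time \(L^2=0\).

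The main obstacle is only bookkeeping. This includes the correct return-time constant \(2L\) for a path excursion, and the treatment of the terminal step: completion occurs on arrival at the second leaf, so no return suffix is counted. Getting the rational function to collapse to the stated form is also where arithmetic slips are most likely. I will guard against these by checking the values at \(\lambda=0\) and \(\lambda\to\infty\) noted above.
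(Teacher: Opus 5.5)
Your proposal is correct and is essentially the paper's proof: the walk reaches $c$ in time $L^2$, and first-step equations from $c$ then finish the computation, with your unknowns $h_0,h_1,k$ playing the roles of the paper's $A_0,A_1,U$. The only differences are that the paper absorbs the geometric number of mean-$2L$ path excursions into the constant $L+1$, which also covers $L=0$ without a separate case, and that your $\lambda=0$ sanity values contain a harmless slip: they should be $h_1=2L+3$ and $k=2L+4$, because $3L+1$ and $3L+2$ contradict your own relation $h_0=L+1+k=3L+5$.
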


\begin{proof}

The hitting time from one endpoint to the other on a path of \(L\) edges
is \(L^2\). At the first arrival at \(c\), every path vertex has been
visited, so only \(u,v\) remain.

Let \(A_0\) be the remaining time from \(c\) when neither leaf is
visited, \(A_1\) the time from \(c\) when exactly one is visited, and
\(U\) the time from the visited leaf while the other is unvisited. If
\(L>0\), a departure from \(c\) into the path and return to \(c\) has
mean \(2L\); if \(L=0\) that option is absent. In either case first-step
analysis reduces to
\begin{equation}
A_0=L+1+U,
\qquad
A_1=L+1+\frac U2,
\qquad
U=1+\frac{A_1}{1+\lambda}.
\label{eq:4.25}
\end{equation}
Solving, \[
A_1=\frac{(1+\lambda)(2L+3)}{1+2\lambda},
\quad
U=1+\frac{2L+3}{1+2\lambda},
\quad
A_0=L+2+\frac{2L+3}{1+2\lambda}.
\] Adding the initial \(L^2\) proves \eqref{eq:4.23}; subtraction at
\(\lambda=0\) proves \eqref{eq:4.24}. Every denominator is positive for
\(\lambda\ge0\), and the numerator in \eqref{eq:4.24} is strictly negative for
\(\lambda>0\). 
\end{proof}

\begin{corollary}[universal functional strictness]\label{cor:4.6}

For every \(G,uv,s\) under consideration,
\begin{equation}
\boxed{
f_s(\lambda)\not\equiv f_s(0)
\quad\text{as a rational function of }\lambda.
}
\label{eq:4.26}
\end{equation}
More precisely, either
\begin{equation}
f_s(\lambda)=\ell_s\lambda+O(1),\qquad \ell_s>0,
\label{eq:4.27}
\end{equation}
or \(G,s,u,v\) is \eqref{eq:4.15} and \eqref{eq:4.24} holds.

\end{corollary}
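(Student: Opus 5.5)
The corollary follows by a dichotomy on the high-conductance coefficient \(\ell_s\) of Theorem~\ref{thm:4.4}, with Theorem~\ref{thm:4.5} handling the exceptional case. First I record that \(f_s\) is a rational function of \(\lambda\). By \eqref{eq:4.6} and Theorem~\ref{thm:4.1}, \(f_s(\lambda)-f_s(0)\) is a finite signed sum of terms \(\lambda(A_S+\lambda B_S)/(1+\lambda\rho_S)\), with \(\rho_S\ge0\). Terms with \(b_B=0\) vanish identically. So \(f_s\) is rational and has no poles on \([0,\infty)\).

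Next, the expansion \eqref{eq:4.10} of Theorem~\ref{thm:4.3} gives
\[
f_s(\lambda)-f_s(0)=\ell_s\lambda+\sum_{\rho\in\mathcal R_s}\frac{c_{s,\rho}\lambda}{1+\rho\lambda}.
\]
Each summand in the finite sum is bounded on \([0,\infty)\) and tends to \(c_{s,\rho}/\rho\) as \(\lambda\to\infty\). Hence \(f_s(\lambda)=\ell_s\lambda+O(1)\), with \(\ell_s\) the limit identified in \eqref{eq:4.14}.

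I then split into the two cases allowed by Theorem~\ref{thm:4.4}.

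\emph{Case \(\ell_s>0\).} This is \eqref{eq:4.27}. Since \(f_s\) grows linearly, \(f_s(\lambda)-f_s(0)\to\infty\), so the rational function is not identically equal to the constant \(f_s(0)\).

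\emph{Case \(\ell_s=0\).} Theorem~\ref{thm:4.4} gives \(\ell_s\ge0\) always and \(\ell_s>0\) whenever \(s\in\{u,v\}\). So this case forces \(s\notin\{u,v\}\). Items 2--3 of Theorem~\ref{thm:4.4} then force the configuration to be exactly \eqref{eq:4.15}. Theorem~\ref{thm:4.5} gives the closed form \eqref{eq:4.24}, which is strictly negative for every \(\lambda>0\), and in particular not identically zero. This yields \eqref{eq:4.26} in this case as well.

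The only step needing care is applying the zero classification correctly. One must check that the exact zero case of Theorem~\ref{thm:4.4}, item 3, is precisely the configuration covered by Theorem~\ref{thm:4.5}, including the degenerate \(L=0\) case \(s=c\). There the graph is the claw-free path \(u-s-v\) and \(uv\) is still a genuine nonedge. Theorem~\ref{thm:4.5} explicitly allows \(L=0\), so this matches. Beyond this bookkeeping the argument is immediate from the cited results.
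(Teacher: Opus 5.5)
Your proof is correct and follows the paper's argument. You split on whether the high-conductance coefficient \(\ell_s\) of Theorem~\ref{thm:4.4} is positive, which gives linear growth and hence a nonconstant response, or zero, where the classification forces configuration \eqref{eq:4.15} (including \(L=0\)) and Theorem~\ref{thm:4.5} supplies the strictly negative closed form \eqref{eq:4.24}; the only cosmetic difference is that you read off the \(O(1)\) remainder from the pole expansion \eqref{eq:4.10}, whereas the paper reads it off termwise from \eqref{eq:4.3} and \eqref{eq:4.6}, which is the same computation.
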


\begin{proof}

If \(\ell_s>0\), \eqref{eq:4.27} follows termwise from \eqref{eq:4.3} and the finite sum
\eqref{eq:4.6}, so the response is unbounded and nonconstant. If \(\ell_s=0\),
apply Theorems~\ref{thm:4.4} and~\ref{thm:4.5}. 
\end{proof}

This is a universal noncancellation theorem in the Laurent valuation at
\(\lambda=\infty\). In the nonexceptional case the first Laurent
coefficient is the positive occupation statistic \eqref{eq:4.14}. In the
exceptional case that coefficient vanishes, but the next coefficient is
the nonzero negative number \(-(2L+3)\) in \[
\frac{f_s(\lambda)-f_s(0)}{\lambda}
=-\frac{2(2L+3)}{1+2\lambda}.
\]

An immediate structural corollary is that \(\ell_s>0\) whenever the
underlying contracted graph has a cycle or a branch, whenever it has a
chord relative to an \(s\)-to-\(w\) path, or whenever it is a path but
\(s,w\) are not its two endpoints. Thus a pendant block or side branch
visible before the contracted vertex already creates a strict positive
leading layer. This is a statement about the response at large
conductance, not its sign at unit conductance.

\section{A completely solved family}

\subsection{A path with two pendant insertion endpoints}

For \(L\ge0\), define \(Y_L\) by \[
V(Y_L)=\{x_0,x_1,\ldots,x_L=c,u,v\},
\]
\begin{equation}
E(Y_L)=\{x_{i-1}x_i:1\le i\le L\}\cup\{cu,cv\}.
\label{eq:5.1}
\end{equation}
Insert \(uv\) with conductance \(\lambda\). Set
\begin{equation}
D_L(\lambda)=(L+1)+(2L+1)\lambda>0
\qquad(\lambda\ge0).
\label{eq:5.2}
\end{equation}

\begin{theorem}[all-start exact response and unit strictness]\label{thm:5.1}

For a path start \(x_j\), \(0\le j\le L\),
\begin{equation}
\boxed{
\delta_{x_j}(\lambda)
=2\lambda\left[
j-\frac{2L+3}{1+2\lambda}
+\frac{jL(L+3)}{(L+1)D_L(\lambda)}
\right].
}
\label{eq:5.3}
\end{equation}
For either endpoint start \(u\) or \(v\),
\begin{equation}
\boxed{
\delta_u(\lambda)=\delta_v(\lambda)
=\lambda\left[
2L+1-\frac{2(2L+3)}{1+2\lambda}
+\frac{L(L+3)(2L+1)}{(L+1)D_L(\lambda)}
\right].
}
\label{eq:5.4}
\end{equation}

At unit conductance no response vanishes. More precisely, for \(L\ge1\),

\begin{equation}
\delta_{x_j}(1)<0
\quad\Longleftrightarrow\quad
0\le 2j\le L+1,
\label{eq:5.5}
\end{equation}
and all remaining path starts have positive response. When \(L=0\),
the sole path start has response \(-2\). Endpoint starts satisfy
\begin{equation}
\delta_u(1)=\delta_v(1)
=\frac{2(6L^3+11L^2-L-3)}
{3(L+1)(3L+2)},
\label{eq:5.6}
\end{equation}
which is \(-1\) for \(L=0\) and strictly positive for every
\(L\ge1\).

\end{theorem}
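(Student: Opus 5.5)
The plan is to compute every fixed-start cover time of \(Y_L+\lambda uv\) in closed form by elementary first-step analysis, organised around the cut vertex \(c\). The path \(x_0\cdots x_L\) lies on one side of \(c\) and the insertion gadget \(\{u,v\}\) on the other. I would not use the inclusion--exclusion formula of Theorem~\ref{thm:2.3}; it is only a consistency check. Throughout, \(d_c=3\) for \(L\ge1\), \(d_u=d_v=1+\lambda\), and \(\lambda=0\) recovers the old graph, so \(\delta_x(\lambda)=f_x(\lambda)-f_x(0)\).

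\textbf{Path starts.} The starting value \(j=0\) is exactly Theorem~\ref{thm:4.5}, which gives \(-2\lambda(2L+3)/(1+2\lambda)\); this matches \eqref{eq:5.3} at \(j=0\). For \(0<j\le L\), I first run the walk until it hits \(\{x_0,c\}\).
\begin{itemize}
\item With probability \(1-j/L\) it hits \(x_0\) first, after which the remaining evolution is the \(j=0\) problem.
\item With probability \(j/L\) it hits \(c\) first. Then \(x_0,\dots,x_{j-1}\) are still unvisited, and what remains is a joint completion problem from \(c\): reach \(x_0\) and also cover \(\{u,v\}\).
\end{itemize}
The time to hit \(\{x_0,c\}\) is \(j(L-j)\) in both graphs, so it cancels in \(\delta\). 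Only the conditional structure matters.

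For the joint problem I introduce states at \(c\) indexed by the number \(k\in\{0,1\}\) of leaves already seen. Each excursion from \(c\) into the path either reaches \(x_0\), with probability \(1/L\), or returns. For these excursions I need the conditional mean durations of the successful and failed versions, which are standard gambler's-ruin quantities on a path. Once \(x_0\) is reached, the walk pays \(L^2\) to return to \(c\) and then finishes with \(A_0\) or \(A_1\) from the proof of Theorem~\ref{thm:4.5}. If the gadget finishes first, the walk is at a leaf and still owes the hitting time of \(x_0\).

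This gives a small linear system in the unknowns \(\{C_{k}\}\) and the leaf-state variables, with coefficients rational in \(\lambda\). I expect its determinant to produce the factor \((L+1)D_L(\lambda)\). I would solve it, subtract the \(\lambda=0\) value, and verify that the result is linear in \(j\) with the slope shown in \eqref{eq:5.3}. Linearity in \(j\) should follow from the start entering only through the probability \(j/L\).

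\textbf{Leaf starts.} From \(u\), one step leads to \(v\) with probability \(\lambda/(1+\lambda)\) and to \(c\) otherwise. In each case I land in a state of the system above: at \(c\) with one or two leaves seen and the whole path unvisited, or at \(v\) with both leaves seen, from where only the path remains. The no-path-vertex-visited case is the joint problem with \(j=L\), but with the local phase started from \(k\ge1\). Solving and subtracting gives \eqref{eq:5.4}, and symmetry gives \(\delta_u=\delta_v\).

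\textbf{Unit conductance.} At \(\lambda=1\) we have \(D_L(1)=3L+2\). Formula \eqref{eq:5.3} becomes \(2\bigl[j\,\kappa_L-(2L+3)/3\bigr]\) with \(\kappa_L=1+L(L+3)/((L+1)(3L+2))\). Hence \(\delta_{x_j}(1)<0\) iff \(j<(2L+3)/(3\kappa_L)\), and I would check that this threshold lies strictly between \((L+1)/2\) and \((L+2)/2\) for every \(L\ge1\). This yields \eqref{eq:5.5} and also excludes equality, since \(j\) is an integer. The \(L=0\) value \(-2\) and the formula \eqref{eq:5.6} are direct substitutions. Positivity of \(6L^3+11L^2-L-3\) for \(L\ge1\) is immediate, since the value at \(L=1\) is \(13\) and the polynomial is increasing.

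\textbf{Main obstacle.} The hardest step is the joint completion system from \(c\) with \(x_0\) unvisited. It needs the correct conditional excursion lengths for both successful and failed path excursions, and it needs the terminal return or hitting costs to be attributed to whichever of the path and the gadget finishes last. I would first try the continuous-time excursion calculus of Theorem~\ref{thm:3.2}, taking \(H\) to be the path with remote time \(\Theta_L\sim\mathrm{Exp}(1/L)\). That reduces everything to Laplace transforms of an exponential at rates \(1\) and \(3/2\). If the start inside the path makes that embedding awkward, I would fall back to the direct linear system.
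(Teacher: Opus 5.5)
Your argument is correct, but it reaches \eqref{eq:5.3}--\eqref{eq:5.4} by a different route from the paper.

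\textbf{The paper's route.} The paper never builds a visited-set chain. It observes that covering $Y_L$ means visiting the three leaves $a=x_0,u,v$, and writes $C=H(a)+2H(u)-2H(a,u)-H(u,v)+H(a,u,v)$. Each target problem is then a one-dimensional grounded equation with a single unknown at $c$. The responses of $H(u,v)$ and $H(a,u,v)$ vanish because the insertion endpoints are absorbing, and the three surviving target responses are exactly the three terms of \eqref{eq:5.3}. This gives decoupled scalar problems, treats $L=0$ uniformly, and shows the simple-pole structure of Theorem~\ref{thm:4.3} term by term. Its cost is that linearity in $j$ appears only after the $j^2$ terms cancel.

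\textbf{Your route.} Decomposing at the first hit of $\{x_0,c\}$ makes that linearity structural: $\delta_{x_j}=(1-j/L)\delta_{x_0}+(j/L)\delta_{x_L}$. It also reuses Theorem~\ref{thm:4.5} at $j=0$ and avoids alternating sums. One unknown serves both $j=L$ and the leaf starts, since
$C_u=1+\frac{C_1}{1+\lambda}+\frac{\lambda}{1+\lambda}\bigl(1+\lambda+L(L+4+2\lambda)\bigr)$.
Here $C_k$ is, as in your notation, the expected remaining time at $c$ with $k$ leaves seen and $x_0$ unseen.

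\textbf{The system you left unsolved does close, and more simply than you feared.}
\begin{itemize}
\item Conditional excursion durations are unnecessary. By linearity, a path excursion from state $k$ contributes $2L+A_k/L+(1-1/L)C_k$.
\item The system is triangular. After eliminating the leaf state, the coefficient of $C_1$ is $\frac13\bigl(\frac{2L+1}{L}-\frac1{1+\lambda}\bigr)=\frac{D_L(\lambda)}{3L(1+\lambda)}$, which is where $D_L$ comes from.
\item Then $C_0=\bigl(2L(L+1)+A_0+2LC_u\bigr)/(2L+1)$.
\item At $L=\lambda=1$ this gives $C_1=131/15$, $C_u=148/15$ and $C_0=142/15$. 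These match the $K_{1,3}$ cover-time vector in Section 6.
\end{itemize}

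\textbf{Two small repairs.} First, your leaf system uses $d_c=3$ and divides by $L$. At $L=0$ you must therefore compute directly on $u-c-v$ plus $\lambda uv$, which gives $C_u=1+\lambda+3/(1+2\lambda)$, rather than substituting $L=0$ into \eqref{eq:5.4}. Second, Theorem~\ref{thm:3.2} does not apply verbatim, because it starts the walk at a pendant leaf $s$ that $Y_L$ lacks. It would have to be re-derived for a walk started at $c$ with $J\equiv L^2$.

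Your unit-conductance analysis coincides with the paper's.
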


\begin{proof}

Let \(a=x_0\). Covering \(Y_L\) is equivalent to visiting the three
leaves \(a,u,v\). Thus for a path start not already at a leaf,
\begin{equation}
C=H(a)+2H(u)-2H(a,u)-H(u,v)+H(a,u,v).
\label{eq:5.7}
\end{equation}
For a start already at a leaf, delete from this finite sum every
target set containing the start; the endpoint identity used below is
displayed explicitly.

First-step solution of the one-dimensional grounded equations gives, for
a path start \(x_j\), the complete target list
\begin{equation}
\begin{aligned}
H_{x_j}^\lambda(a)
  &=j(2L+4+2\lambda-j),\\
H_{x_j}^\lambda(u)
  &=L^2-j^2+\frac{(1+\lambda)(2L+3)}{1+2\lambda},\\
H_{x_j}^\lambda(a,u)
  &=j(L-j)+\frac{j(L+3)(1+\lambda)}{D_L(\lambda)},\\
H_{x_j}^\lambda(u,v)
  &=L^2-j^2+L+1,\\
H_{x_j}^\lambda(a,u,v)
  &=j(L-j)+\frac{j(L+2)}{2L+1}.
\end{aligned}
\label{eq:5.8a}
\end{equation}
Subtracting the values at zero gives
\begin{equation}
\begin{aligned}
\Delta H_{x_j}(a)&=2j\lambda,\\
\Delta H_{x_j}(u)&=-\frac{(2L+3)\lambda}{1+2\lambda},\\
\Delta H_{x_j}(a,u)&=
-\frac{jL(L+3)\lambda}{(L+1)D_L(\lambda)}.
\end{aligned}
\label{eq:5.8}
\end{equation}
The responses for \(H(u,v)\) and \(H(a,u,v)\) are zero because the
walk stops on first entering the insertion endpoints. Substitution in
\eqref{eq:5.7} proves \eqref{eq:5.3}.

For completeness, the boundary equations behind \eqref{eq:5.8} are as follows.
From \(c\), a complete excursion into the path and back has mean \(2L\).
If \(C\) is the time from \(c\) to one specified pendant target and
\(V\) is the time from the other pendant vertex, then \[
C=L+1+\frac V2,
\qquad
V=1+\frac{C}{1+\lambda},
\] so \[
C=\frac{(1+\lambda)(2L+3)}{1+2\lambda}.
\] For the target \(\{a,u\}\), the path solution has the form
\(h_i=i(L-i)+iC/L\), and the boundary equation at \(c\) gives \[
C=\frac{L(L+3)(1+\lambda)}{D_L(\lambda)}.
\] Subtracting the \(\lambda=0\) values yields \eqref{eq:5.8}, including its
continuous \(L=0\) interpretation.

For a start at \(u\), the reduced cover identity is \[
C_u=H_u(a)+H_u(v)-H_u(a,v).
\] Here
\begin{equation}
\begin{aligned}
H_u^\lambda(a)&=1+\lambda+L(L+4+2\lambda),\\
H_u^\lambda(v)&=1+\frac{2L+3}{1+2\lambda},\\
H_u^\lambda(a,v)&=1+\frac{L(L+3)}{D_L(\lambda)}.
\end{aligned}
\label{eq:5.8b}
\end{equation}
The three changes are \[
\begin{aligned}
\Delta H_u(a)&=(2L+1)\lambda,\\
\Delta H_u(v)&=-\frac{2(2L+3)\lambda}{1+2\lambda},\\
\Delta H_u(a,v)&=
-\frac{L(L+3)(2L+1)\lambda}
{(L+1)D_L(\lambda)}.
\end{aligned}
\] This proves \eqref{eq:5.4}.

At \(\lambda=1\), the sign of \eqref{eq:5.3} is the sign of \(j-j_*\), where
\begin{equation}
j_*=\frac{(2L+3)(3L^2+5L+2)}
{6(2L^2+4L+1)}
=\frac{L+1}{2}
+\frac{(L+1)(L+3)}{6(2L^2+4L+1)}.
\label{eq:5.9}
\end{equation}
For \(L\ge1\), \[
0<\frac{(L+1)(L+3)}{6(2L^2+4L+1)}<\frac12,
\] because \[
3(2L^2+4L+1)-(L+1)(L+3)=5L^2+8L>0.
\] If \(L\) is odd, \((L+1)/2\) is an integer and \(j_*\) lies strictly
before the next half-integer. If \(L\) is even, \((L+1)/2\) is a
half-integer and \(j_*\) lies strictly before the next integer. Thus
\(j_*\) is never an integer and \eqref{eq:5.5} follows. This is the
unit-conductance arithmetic obstruction.

Formula \eqref{eq:5.6} follows by simplifying \eqref{eq:5.4} at one. Its numerator
polynomial is \(P(L)=6L^3+11L^2-L-3\). It satisfies \[
P(L+1)-P(L)=18L^2+40L+16>0\qquad(L\ge0).
\] Since \(P(0)=-3\) and \(P(1)=13\), the endpoint equality cases are
exactly as stated. 
\end{proof}

\subsection{Worst-start classification}

The old cover times in \(Y_L\) are
\begin{equation}
\boxed{
c_j=L^2+3L+5-j^2+A_Lj,
\quad
A_L=\frac{2L(L^2+4L+1)}{(L+1)(2L+1)},
}
\label{eq:5.10}
\end{equation}
for path starts, and
\begin{equation}
\boxed{
c_u=c_v=L^2+5L+2+\frac2{L+1}.
}
\label{eq:5.11}
\end{equation}

\begin{theorem}[worst-start classification for \(Y_L\)]\label{thm:5.2}

Let \(C_L\) and \(C_L^+\) be the old and unit-inserted worst-start cover
times. Then
\begin{equation}
C_L^+-C_L<0\quad(L=0,1),
\qquad
C_L^+-C_L>0\quad(L\ge2).
\label{eq:5.12}
\end{equation}
More exactly,
\begin{equation}
C_0^+-C_0=-2,
\qquad
C_1^+-C_1=-\frac2{15}.
\label{eq:5.13}
\end{equation}

\end{theorem}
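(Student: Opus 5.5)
The plan is to reduce \eqref{eq:5.12}--\eqref{eq:5.13} to a comparison of finitely many explicit rational functions of \(L\), using \eqref{eq:5.10}--\eqref{eq:5.11} for the old cover times and Theorem~\ref{thm:5.1} at \(\lambda=1\) for the responses. Write \(c_j^+=c_j+\delta_{x_j}(1)\) and \(c_u^+=c_u+\delta_u(1)\). Then
\[
C_L=\max\Bigl(\max_{0\le j\le L}c_j,\;c_u\Bigr),\qquad
C_L^+=\max\Bigl(\max_{0\le j\le L}c_j^+,\;c_u^+\Bigr).
\]
By \eqref{eq:5.3} at \(\lambda=1\), \(\delta_{x_j}(1)\) is affine in \(j\) with positive slope. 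Hence \(c_j^+\) is again a concave quadratic in \(j\), with leading coefficient \(-1\).

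\textbf{Small cases.} For \(L=0,1\) I will evaluate every candidate exactly. For \(L=0\) the candidates are \(c_0=5\), \(c_u=4\) and their responses \(-2\) and \(-1\). This gives \(C_0=5\) and \(C_0^+=3\), so the difference is \(-2\). For \(L=1\), \(A_1=2\), so \(c_0=9\) and \(c_1=10\), while \(c_u=9\). The responses are read from \eqref{eq:5.3} and \eqref{eq:5.6}; for instance \(\delta_u(1)=26/30=13/15\). I will then check that the maximum drops by exactly \(2/15\). Here one must track which start attains each maximum, since it may change: presumably \(x_1\) is worst before, and after insertion either \(x_1\) or \(u\) is worst. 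This is routine arithmetic, which I will carry out with exact fractions.

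\textbf{The case \(L\ge2\).} Only a strict lower bound on \(C_L^+\) is needed. The approach is to exhibit the old worst start and show that its response is positive.
\begin{enumerate}
\item \emph{Locate the old maximiser.} Polynomial division gives
\[
A_L=L+\frac{5L^2+L}{(L+1)(2L+1)},
\]
so the vertex \(A_L/2\) of \(j\mapsto c_j\) lies in \((L/2,\,L/2+5/4)\). I will take \(j_0\) to be the integer maximiser, namely an integer nearest \(A_L/2\).
\item \emph{Path start beats the endpoint start.} I will show \(c_{j_0}\ge c_u\). Since \(c_{j_0}-(L^2+3L+5)\approx L^2/4+\Theta(L)\) while \(c_u-(L^2+3L+5)=2L-3+2/(L+1)\), this should hold from a small threshold on. The threshold and the cases below it will be checked directly. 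This gives \(C_L=c_{j_0}\).
\item \emph{The response at \(j_0\) is positive.} By \eqref{eq:5.5}, \(\delta_{x_{j_0}}(1)>0\) exactly when \(2j_0>L+1\), and \(j_0\le L\) holds automatically. From the location of \(A_L/2\) I expect \(j_0\ge\lceil (L+2)/2\rceil\). I will verify this separately for \(L\) even and \(L\) odd by pinning \(A_L/2\) between consecutive half-integers. Then \(C_L^+\ge c_{j_0}^+>c_{j_0}=C_L\).
\end{enumerate}

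\textbf{Main obstacle.} I expect the delicate point to be step (3) for small even \(L\), say \(L=2,4\). There \(A_L/2\) may sit close to a half-integer, and the nearest integer could fall at or below \((L+1)/2\). If that happens, I will not rely on \(j_0\). Instead I will compare \(C_L^+\ge c^+_{j_0+1}\) directly with \(c_{j_0}\), or simply tabulate \(L=2,3,4\) exactly. After that, I will use the asymptotic margin, which is roughly \(3/4\) of a unit in \(j\), for all larger \(L\).
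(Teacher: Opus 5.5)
\textbf{Gap: the old maximiser can have negative response.} Your cases $L=0,1$ are correct, and for $L\ge2$ your skeleton matches the paper's: the old worst start lies on the path, and its unit response is then positive by \eqref{eq:5.5}. The gap is in step (3). The expectation $j_0\ge\lceil (L+2)/2\rceil$ is false for $L=3$ and $L=5$, and your contingency targets the wrong parity.

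From $c_{j+1}-c_j=A_L-(2j+1)$, even $L\ge2$ causes no difficulty:
\[
A_L-(L+1)=\frac{(3L+1)(L-1)}{(L+1)(2L+1)}>0,
\]
so $j_0=L/2+1$, including at $L=2,4$. For odd $L$ you need $A_L>L+2$, that is
\[
\frac{L^2-5L-2}{(L+1)(2L+1)}>0,
\]
which holds only for $L\ge7$. The odd-case asymptotic margin is $1/4$, not $3/4$. For $L=3$ and $L=5$ the \emph{unique} old maximiser is $x_{(L+1)/2}$:
\begin{itemize}
\item for $L=3$, $c_2=199/7>c_3=197/7$;
\item for $L=5$, $c_3=626/11>c_4=1877/33$.
\end{itemize}
This start sits exactly at $2j=L+1$, where \eqref{eq:5.5} gives a strictly negative response, namely $-4/11$ and $-16/51$. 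So the middle inequality in $C_L^+\ge c_{j_0}^+>c_{j_0}=C_L$ is reversed. Your plan tabulates only $L=2,3,4$ and then invokes the margin for larger $L$, so it leaves $L=5$ unproved.

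\textbf{How to close it.} Split by parity with the exact thresholds above, and treat $L=3,5$ separately. There the new worst start differs from the old one, so you must exhibit a later start whose post-insertion value beats the old maximum. The paper uses $x_L=c$, with $c_3^+-C_3=167/77$ and $c_5^+-C_5=1792/561$. Your fallback $x_{j_0+1}$ also works: for $L=5$, $c_4^+-C_5=1369/561$.

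\textbf{Step (2).} You can make this step threshold-free as the paper does. For every $L$,
\[
c_L-c_u=\frac{L^3+L^2+3L+1}{(L+1)(2L+1)}>0,
\]
so the path maximum always dominates the endpoint start. No asymptotic comparison or small-$L$ check is needed.
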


\begin{proof}

Equations \eqref{eq:5.10}--\eqref{eq:5.11} follow by inserting the elementary
target-hitting solutions from the proof of Theorem~\ref{thm:5.1} into leaf
inclusion--exclusion. Also
\begin{equation}
c_L-c_u=
\frac{L^3+L^2+3L+1}{(L+1)(2L+1)}>0,
\label{eq:5.14}
\end{equation}
so at least one old worst start lies on the path.

The path sequence is strictly concave and
\begin{equation}
c_{j+1}-c_j=A_L-(2j+1).
\label{eq:5.15}
\end{equation}
If \(L\ge2\) is even, then \[
A_L-(L+1)=
\frac{(3L+1)(L-1)}{(L+1)(2L+1)}>0.
\] Hence every old path maximiser occurs strictly after \(L/2\), where
its unit response is positive by \eqref{eq:5.5}. Evaluating the same start after
insertion gives \(C_L^+>C_L\).

If \(L\ge7\) is odd, then \[
A_L-(L+2)=
\frac{L^2-5L-2}{(L+1)(2L+1)}>0.
\] Thus an old maximiser occurs after \((L+1)/2\), again in the positive
region of \eqref{eq:5.5}, and \(C_L^+>C_L\).

The two remaining odd cases are exact substitutions in \eqref{eq:5.10} and \eqref{eq:5.3}:
\[
L=3:\quad C_3=\frac{199}{7},
\quad c_3^+=\frac{2356}{77},
\quad c_3^+-C_3=\frac{167}{77}>0,
\] \[
L=5:\quad C_5=\frac{626}{11},
\quad c_5^+=\frac{33718}{561},
\quad c_5^+-C_5=\frac{1792}{561}>0.
\] For \(L=0\), the old vector is \((5,4,4)\) and the new vector is
\((3,3,3)\). For \(L=1\), the old maximum is \(10\) at \(c\), while the
new maximum is \(148/15\) at \(u,v\). This proves \eqref{eq:5.12}--\eqref{eq:5.13}.

\end{proof}

This theorem treats relocation explicitly: for \(L=1\) the old maximiser
is \(c\), the new maximisers are \(u,v\), and yet the worst-start value
decreases strictly.

\subsection{Closing a path}

\begin{theorem}[path closure, fixed start]\label{thm:5.3} For \(n\ge3\) and \(P_n\to C_n\),
with \(N=n-1\) and start \(i\in\{0,\ldots,N\}\),

\[
t_{\mathrm{cov}}(P_n,i)=N^2+i(N-i),
\]

\[
t_{\mathrm{cov}}(C_n,i)=\frac{n(n-1)}2,
\]

so

\begin{equation}
\boxed{
\delta_i
=
-\frac{(n-1)(n-2)}2-i(n-1-i)<0.
}
\label{eq:5.16}
\end{equation}

\end{theorem}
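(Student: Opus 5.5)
We will compute the two fixed-start cover times directly and subtract. No perturbation machinery is needed.

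\textbf{Cycle.} The cover time of \(C_n\) from any start equals \(n(n-1)/2\). This is the classical fact that the number of vertices visited grows one at a time. With \(k\) vertices visited, the range is an arc of \(k\) vertices and the walk sits at one of its ends. Gaining a new vertex means exiting an interval of \(k+1\) vertices, that is \(k-1\) interior edges plus the two boundary vertices, starting from just inside one end. By the gambler's-ruin formula \(x(m-x)\), with \(m=k+1\) and \(x=1\), the expected time is \(k\). Summing over \(k=1,\ldots,n-1\) gives \(\sum_{k=1}^{n-1}k=n(n-1)/2\). This does not depend on \(i\), by rotational symmetry.

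\textbf{Path.} Label the vertices \(0,\ldots,N\) and start at \(i\). Covering means visiting both endpoints \(0\) and \(N\). We use Lemma~\ref{lem:2.1} with the two leaf targets, which gives
\[
t_{\mathrm{cov}}(P_n,i)=H_i(0)+H_i(N)-H_i(\{0,N\}).
\]
The three hitting times are standard gambler's-ruin quantities on a path with reflecting ends:
\[
H_i(0)=i(2N-i),\qquad H_i(N)=(N-i)(N+i),\qquad H_i(\{0,N\})=i(N-i).
\]
Each follows by solving \(h_{x-1}-2h_x+h_{x+1}=-2\) with the reflecting condition \(h_N=1+h_{N-1}\) (respectively at \(0\)) and the Dirichlet condition at the target. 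Substituting,
\[
t_{\mathrm{cov}}(P_n,i)=2iN-i^2+N^2-i^2-iN+i^2=N^2+iN-i^2=N^2+i(N-i).
\]
As a check, \(i=0\) gives \(N^2\), and the endpoint cases agree with the degenerate forms of the inclusion--exclusion. An alternative route: hit the nearer endpoint first, then cross the whole path, giving \(i(N-i)+N^2\) in expectation by the strong Markov property.

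\textbf{Difference.} With \(N=n-1\),
\[
\delta_i=\frac{n(n-1)}{2}-(n-1)^2-i(n-1-i)=-\frac{(n-1)(n-2)}{2}-i(n-1-i).
\]
For \(n\ge3\) the first term is strictly negative, and the second term is nonpositive for \(0\le i\le N\). Hence \(\delta_i<0\).

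The only step needing any care is the cycle range-growth argument, specifically justifying that the walk sits at an end of the visited arc at each new-vertex time. This holds because a new vertex can only be reached across the boundary of the arc. Everything else is routine.
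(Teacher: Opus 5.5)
Your proof is correct and follows the paper's route. The path computation is exactly the paper's identity \(\max(T_0,T_N)=T_0+T_N-\min(T_0,T_N)\) with the same three gambler's-ruin values, and your range-growth sum \(\sum_{k=1}^{n-1}k\) for the cycle is the standard argument behind the paper's one-line appeal to the ruin equations. There are two harmless wording slips: the exit interval in the cycle step has \(k+1\) edges (equivalently \(k+2\) vertices in the lift to \(\mathbb Z\)), which is what your choice \(m=k+1\) actually uses; and in your alternative path route the walk first hits whichever endpoint it reaches first, not necessarily the nearer one, though the expectation \(i(N-i)+N^2\) is unchanged.
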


\begin{proof} On the path, covering from \(i\) is the time to visit both
endpoints. Let \(T_0,T_N\) be their hitting times. Since
\(\max(T_0,T_N)=T_0+T_N-\min(T_0,T_N)\), the gambler's-ruin equations
give

\[
\mathbb E_iT_0=i(2N-i),\qquad
\mathbb E_iT_N=N^2-i^2,
\]

and \(\mathbb E_i\min(T_0,T_N)=i(N-i)\). Therefore
\(t_{\mathrm{cov}}(P_n,i)=N^2+i(N-i)\). On \(C_n\), after cutting at the
starting vertex, the last new vertex is reached when a simple walk on
\(\{0,\ldots,n\}\) hits an endpoint; the standard ruin equations give
\(n(n-1)/2\), independently of the start. Subtraction proves \eqref{eq:5.16}.

\end{proof}

\section{Worst-start response and equality status}

For completeness, write

\[
C=\max_x t_{\rm cov}(G,x),
\qquad
g_x=C-t_{\rm cov}(G,x)\ge0.
\]

If \(\delta_x\) is the fixed-start response, then the worst-start change
is the following elementary envelope identity.

\begin{equation}
t_{\rm cov}(G+uv)-t_{\rm cov}(G)
=\max_x(\delta_x-g_x).
\label{eq:6.1}
\end{equation}

\begin{lemma}[deficit-adjusted envelope]\label{lem:6.1} Equation \eqref{eq:6.1} holds,
and a vertex is a maximising start after insertion exactly when its
adjusted response \(\delta_x-g_x\) attains the maximum in \eqref{eq:6.1}.

\end{lemma}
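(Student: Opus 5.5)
The identity \eqref{eq:6.1} is a direct consequence of the definitions, and the proof is a one-line rewriting. Set \(C^+=\max_x t_{\rm cov}(G+uv,x)\) and \(C=\max_x t_{\rm cov}(G,x)\). For every start \(x\), the definitions of \(\delta_x\) in \eqref{eq:1.1} and of \(g_x\) give
\[
t_{\rm cov}(G+uv,x)=t_{\rm cov}(G,x)+\delta_x=C-g_x+\delta_x .
\]
Both graphs share the vertex set \(V\), so the new worst-start value is the maximum of this expression over the same finite index set:
\[
C^+=\max_{x\in V}\bigl(C+\delta_x-g_x\bigr)=C+\max_{x\in V}(\delta_x-g_x).
\]
The constant \(C\) factors out of the maximum. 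Subtracting \(C\) gives \eqref{eq:6.1}.

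For the characterisation of maximisers, I use that \(x\mapsto t_{\rm cov}(G+uv,x)\) and \(x\mapsto\delta_x-g_x\) differ by the constant \(C\). A vertex therefore maximises one function exactly when it maximises the other. This is precisely the claim that the maximising starts after insertion are those attaining the maximum in \eqref{eq:6.1}.

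There is essentially no obstacle here. The only points to check are that the maximum ranges over the same finite set \(V\) before and after insertion, since adding an edge does not change the vertex set, and that all cover times are finite, which holds by Lemma~\ref{lem:2.1}. The condition \(g_x\ge0\) is not needed for the identity. It only explains why the deficit term can make an old non-maximiser become a maximiser, as in the \(L=1\) case of Theorem~\ref{thm:5.2}.
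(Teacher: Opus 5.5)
Your proposal is correct and is essentially the paper's proof. Both write the new fixed-start value as \(C-g_x+\delta_x\) and take the maximum over the common vertex set. Subtracting the constant \(C\) then gives both the envelope identity and the characterisation of the maximising starts.
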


\begin{proof} Since \(t_{\rm cov}(G,x)=C-g_x\), the new fixed-start
value is \(C-g_x+\delta_x\). Taking the maximum over \(x\) and
subtracting \(C\) proves both claims. 
\end{proof}

Thus a formerly nonmaximising start can become worst after insertion
only by overcoming its old deficit. This observation explains the
relocation in Theorem~\ref{thm:5.2}; no separate taxonomy is required.

Relocation alone does not decide the sign of the worst-start change. For
the star \(K_{1,3}\), inserting a chord between two leaves changes the
cover-time vector from

\[
(10,9,9,9)
\quad\text{to}\quad
\left(\frac{142}{15},\frac{148}{15},\frac{148}{15},\frac{17}{3}\right).
\]

The maximising start moves from the centre to the two chord endpoints,
while the worst-start value decreases by \(2/15\). For the analogous
chord in \(K_{1,4}\), the same strong relocation occurs but the
worst-start value increases by \(103/105\). These examples are
consequences of \eqref{eq:6.1}, not additional structural hypotheses.

The unit-conductance equality questions remain open.

\begin{conjecture}[fixed-start strict nonequality]\label{conj:6.2} For every
connected finite simple graph, every genuine nonedge \(uv\), and every
start \(s\),

\begin{equation}
t_{\rm cov}(G+uv,s)\ne t_{\rm cov}(G,s).
\label{eq:6.2}
\end{equation}
\end{conjecture}

\begin{conjecture}[worst-start strict nonequality]\label{conj:6.3} Under the same
hypotheses,

\begin{equation}
t_{\rm cov}(G+uv)\ne t_{\rm cov}(G).
\label{eq:6.3}
\end{equation}
\end{conjecture}

The second conjecture is not a formal consequence of the first: equality
in \eqref{eq:6.1} may be attained by a formerly non-worst start satisfying
\(\delta_x=g_x>0\).

Exact rational computation supplies a delimited finite check. For
\(x\in A\subsetneq V\), with boundary condition \(C_G(x,V)=0\), the
visited-set recurrence

\begin{equation}
C_G(x,A)=1+\frac1{d_x}\sum_{y\sim x}C_G(y,A\cup\{y\})
\label{eq:6.4}
\end{equation}

is solved in decreasing order of \(|A|\). For each proper visited set,
the same-layer coefficient matrix is the positive-definite grounded
Laplacian, so Gaussian elimination over the rationals gives the complete
cover-time vector exactly.

\begin{theorem}[finite exact equality census]\label{thm:6.4} Neither \eqref{eq:6.2} nor
\eqref{eq:6.3} has an equality instance among connected simple graphs of order at
most seven. There is also no equality instance among trees of orders
eight and nine.

\end{theorem}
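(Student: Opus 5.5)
The statement is a finite computational census, so the proof is an exhaustive exact-arithmetic enumeration rather than a structural argument. I would organise it in three stages.

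\textbf{Stage one: enumerate the instances.} I would list, up to isomorphism, all connected simple graphs on \(n\le 7\) vertices; there are 853 of them, producible by a canonical-augmentation generator such as \texttt{geng}. I would also list all free trees on 8 and 9 vertices, of which there are 23 and 47. For each graph \(G\) I would enumerate the orbits of nonedges \(uv\) under \(\operatorname{Aut}(G)\). Isomorphic triples give identical responses, so one representative per orbit suffices. For the fixed-start statement \eqref{eq:6.2}, every start \(s\in V\) has to be tested for each representative. For the worst-start statement \eqref{eq:6.3}, only the two worst-start values are needed, and these come for free from the full cover-time vectors.

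\textbf{Stage two: compute cover times exactly.} For each \(G\) and each \(G+uv\), I would compute the complete cover-time vector \(x\mapsto C_G(x,\{x\})\) using the visited-set recurrence \eqref{eq:6.4}. The recurrence is solved layer by layer in decreasing \(|A|\). Each layer is a linear system whose matrix is a grounded Laplacian, which is positive definite by Lemma~\ref{lem:2.2}, so it is nonsingular. Gaussian elimination over \(\mathbb Q\) then gives exact rationals with no rounding.

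Two consistency checks guard against implementation error:
\begin{itemize}
\item On small cases I would independently recompute the fixed-start values from the inclusion--exclusion formula \eqref{eq:2.1} and compare.
\item I would reproduce the known exact values from the paper: the star vectors for \(K_{1,3}\) quoted in Section 6, the path and cycle formulas of Theorem~\ref{thm:5.3}, and the \(Y_L\) formulas \eqref{eq:5.10}--\eqref{eq:5.11} of Theorem~\ref{thm:5.1}.
\end{itemize}

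\textbf{Stage three: compare and certify.} For every triple I would test \(\delta_s\ne 0\) as an exact rational inequality. I would also test \(\max_x t_{\rm cov}(G+uv,x)\ne\max_x t_{\rm cov}(G,x)\). Any equality would be recorded as a counterexample. The claim is that none occurs. For reproducibility, the certificate would store for each instance the minimal \(|\delta_s|\) and the minimal worst-start gap as reduced fractions. These margins show the inequalities are strict and not artefacts.

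\textbf{Main obstacle.} The principal difficulty is cost and reliability rather than mathematics. The recurrence has roughly \(n2^{n}\) states per graph, and the number of (graph, nonedge, start) triples grows quickly. For \(n\le 9\) this is still modest, but the rational arithmetic must be exact and the isomorphism reduction must be correct. If the reduction were wrong, a configuration could be silently skipped. To guard against this, I would also run a brute-force variant without automorphism reduction for \(n\le 6\) and check that both runs agree.
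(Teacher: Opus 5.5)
Your plan is essentially the paper's proof: enumerate representatives (the paper uses all labelled connected graphs for $n\le 6$, the Read--Wilson atlas at $n=7$, and one representative of each unlabelled tree at $n=8,9$), test every nonedge and every start, and solve the visited-set recurrence \eqref{eq:6.4} exactly over $\mathbb Q$; your reduction of nonedges to $\operatorname{Aut}(G)$-orbits is a valid refinement, whereas the paper simply checks every nonedge of each representative. One small slip: $853$ is the number of connected graphs on exactly seven vertices, not on at most seven (that total is $996$), which does not affect the method.
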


\begin{proof} Orders at most six are generated exhaustively as labelled
connected graphs. At order seven, the Read--Wilson Graph Atlas
\cite{readwilson1998} provides one representative of each isomorphism
class; every nonedge and every start of every connected representative
is checked. Isomorphisms preserve the walk, the cover event, and the
inserted nonedge, so this exhausts all labelled rooted insertion
configurations. The tree check similarly uses one representative of
every unlabelled tree. All transition systems and comparisons are
evaluated with exact rational arithmetic. The atlas handling uses
NetworkX \cite{hagberg2008}. 
\end{proof}

At order seven the computation covers \(8{,}361\) graph--nonedge pairs
and \(58{,}527\) rooted instances. The fixed-start totals are
\(38{,}779\) negative, zero equalities, and \(19{,}748\) positive; the
corresponding worst-start totals are \(5{,}795\), zero, and \(2{,}566\).
The tree check adds \(15{,}708\) rooted instances at orders eight and
nine. These are computer-assisted finite theorems, not evidence of a
monotonicity principle.

\textbf{Reproducibility archive.} The code and computational records are
openly available in Zenodo at \url{https://doi.org/10.5281/zenodo.21981819}.
The archive contains the exact-arithmetic programs, NetworkX 3.6.1 as a
vendored dependency, and the exact Read--Wilson atlas file with SHA-256
\path{73fc416df0164923607751cb759f4ae81deb5f6550bf25be59c86de3b747e41d}.
Its \texttt{REPRODUCE.md} gives run commands, expected terminal output,
source and output checksums, small regression examples, and measured
runtime and peak-memory information for the census commands. The
separate radius-one certificate supplies a human-readable table, JSON,
CSV, and a standard-library script regenerating every rational
inequality and slack.

\section{Conclusion and open problems}

The exact response of cover time to one inserted edge is a genuinely
global quantity. Target-set inclusion--exclusion and killed Green
matrices give an exact old-graph formula, but the alternating target
family permits both signs. The arbitrary-radius construction strengthens
this obstruction: even the full ambient-degree-labelled marked
neighbourhood of any prescribed radius, joined with the marked degrees,
marked distance, and endpoint resistance, does not determine the sign.

Conductance interpolation gives a complementary global picture. Although
the unit response may involve cancellation, the leading coefficient at
infinite conductance is nonnegative and has the exact contracted-walk
occupation interpretation \eqref{eq:4.14}. Its zero case is a single pendant-path
family, whose response is explicitly negative. Thus no
conductance-response function is identically constant, even though
equality at the particular value \(\lambda=1\) remains unresolved in
general.

The principal open problems are:

\begin{enumerate}
\def\labelenumi{\arabic{enumi}.}
\tightlist
\item
  Decide the fixed-start and worst-start strict-nonequality conjectures.
\item
  Strengthen Theorem~\ref{thm:1.1} so that the opposite-sign graphs also have
  equal order.
\item
  Determine broader graph classes for which the unit response has a
  fixed sign.
\item
  Find a polynomial-size sign certificate replacing the full target
  family.
\end{enumerate}

\section*{Declarations}
\addcontentsline{toc}{section}{Declarations}

\textbf{Funding statement.} This work received no specific grant from
any funding agency, commercial or not-for-profit sectors.

\textbf{Competing interests.} The author declares none.

\textbf{Data availability statement.} The code, exact-arithmetic
certificates, and computational records supporting this study are openly
available in Zenodo at \url{https://doi.org/10.5281/zenodo.21981819}.

\bibliographystyle{amsplain}
\bibliography{references}

\end{document}